\def\vspecdots{\vbox{\baselineskip=2pt \lineskiplimit=0pt 
    \kern2pt \hbox{.}\hbox{.}\hbox{.}}} 
\newcommand{\until}[1]{\{1,\dots, #1\}}
\newcommand{\subscr}[2]{#1_{\textup{#2}}}
\newcommand{\setdef}[2]{\{#1 \; | \; #2\}}
\newcommand{\union}{\operatorname{\cup}}
\newcommand{\diag}[1]{\ensuremath{\operatorname{diag}[#1]}}
\newcommand{\Vect}[1]{\ensuremath{\operatorname{vec}(#1)}}
\newcommand{\kron}{\operatorname{\otimes}}
\newcommand{\refp}[1]{(\ref{#1})}
\newcommand{\vectorMat}{\ensuremath{\operatorname{vec}}}
\newcommand{\PP}{\mathcal{P}}
\newcommand{\fmt}{m}
\newcommand{\FMT}{M}
\newcommand{\MFMT}{\mathcal{M}}
\newcommand{\multEvaders}{P_\textnormal{e}^{(1)},P_\textnormal{e}^{(2)},\dots,P_\textnormal{e}^{(M)}}
\newcommand{\multEvadersCTMC}{Q_\textnormal{e}^{(1)},Q_\textnormal{e}^{(2)},\dots,Q_\textnormal{e}^{(M)}}
\newcommand{\multPursuers}{P_\textnormal{p}^{(1)},P_\textnormal{p}^{(2)},\dots,P_\textnormal{p}^{(L)}}
\newcommand{\multPursuersCTMC}{Q_\textnormal{p}^{(1)},Q_\textnormal{p}^{(2)},\dots,Q_\textnormal{p}^{(L)}}
\newcommand{\pursuersKronEvaders}{(P_\textnormal{p}^{(1)} \kron P_\textnormal{p}^{(2)} \kron \dots \kron P_\textnormal{p}^{(L)} \kron P_\textnormal{e}^{(1)} \kron P_\textnormal{e}^{(2)} \kron \dots \kron P_\textnormal{e}^{(M)})}
\newcommand\oprocendsymbol{\hbox{$\square$}}
\newcommand\oprocend{\relax\ifmmode\else\unskip\hfill\fi\oprocendsymbol}
\def \bs {\boldsymbol}
\def \mc {\mathcal}
\newcommand{\G}{\mc G}
\newcommand{\V}{V}
\newcolumntype{d}[1]{D{.}{.}{#1}}
\DeclareSymbolFont{bbold}{U}{bbold}{m}{n}
\DeclareSymbolFontAlphabet{\mathbbold}{bbold}
\newcommand{\vect}[1]{\mathbbold{#1}}
\newcommand{\vectorones}[1][]{\vect{1}_{#1}}
\newcommand{\vectorzeros}[1][]{\vect{0}_{#1}}
\newtheorem{theorem}{Theorem}
\newtheorem{corollary}{Corollary}
\newtheorem{lemma}{Lemma}
\newtheorem{remark}{Remark}
\newenvironment{proof}[1][Proof]{\begin{trivlist}
\item[\hskip \labelsep {\bfseries #1}]}{\end{trivlist}}
\renewcommand{\theenumi}{(\roman{enumi}}
\title{The Meeting Time of Multiple Random Walks
  \thanks{This work has been supported by Air Force Office of Scientific Research award FA9550-15-1-0138.}}
\author{Mishel George \thanks{Mechanical Engineering, University of California, Santa Barbara, CA 93106-5070~(mishel@engineering.ucsb.edu).} \and Rushabh Patel \thanks{Northrop Grumman Aerospace Systems, Redondo Beach, CA~(rushabh.patel@ngc.com).} \and Francesco Bullo \thanks{Center for Control, Dynamical Systems and Computation, University of California, Santa Barbara,
CA 93106-5070 (bullo@engineering.ucsb.edu).}}
\begin{document}
\graphicspath{{./figures/}}
\maketitle                    
\renewcommand{\theenumi}{(\roman{enumi})}%

\begin{abstract}
  This article rigorously analyzes the \emph{meeting time} between
pursuers and evaders performing random walks on digraphs. There exist
several bounds on the expected meeting time between random walkers on
graphs in the literature, however, closed-form expressions are limited
in scope. By utilizing the notion that multiple random walks on a
common graph can be understood as a single random walk on the
Kronecker product graph, we are able to provide the first analytic
expression for the meeting time in terms of the transition matrices of
the random walkers when modeled by either discrete-time Markov chains
or continuous-time Markov chains.  We further extend the results to
the case of multiple pursuers and multiple evaders performing
independent random walks. We present various sufficient conditions for
pairs (or tuples) of transition matrices that satisfy certain
conditions on the absorbing classes for which finite meeting times are
guaranteed to exist.
\end{abstract}

\section{Introduction}\label{sec:Intro}
\subsection{Problem description and motivation}
In this paper, we examine the meeting time between two
groups of random walkers. This problem is motivated by a group of
pursuers trying to intercept a group of evaders. The meeting time,
in the context of this paper, describes the average time till a
first encounter occurs between one of the pursuers and one of the evaders
given initial positions of the pursuers and the evaders. This notion of two
adversarial mobile groups wherein one of the groups is trying to
intercept members of the other group appears under several names:
pursuit-evasion games~\cite{RV-OS-HJK-DHS-SS:02}, predator-prey
interactions~\cite{CC-AF-TR:09}, cops and robbers
games~\cite{AB-PG-GH-JK:09} and princess-monster
games~\cite{SA-RF-RL-GO:08}. Our primary motivation is the design of
stochastic surveillance strategies for quickest detection of mobile
intruders. Single and multi-agent surveillance strategies appear in
environmental monitoring~\cite{YE-AS-GAK:08,FP-AF-FB:09v}, minimizing
emergency vehicle response times~\cite{THB-JSK:02}, traffic routing
and border patrol~\cite{MP-CD-VD-AK-YW:03,SS-SM-FB:06f}. More broadly
random walks on networks appear in many areas of research: they are
used to describe effective resistance in electrical
networks~\cite{PGD-JLS:84,PT:91}, for link-prediction and information
propagation in social networks~\cite{LB-JL:11,CG-MM-AS:04}, and in designing
search algorithms on networks~\cite{QL-PC-EC-KL-SS:02, MSS-DT-SB:14}. Aside from our proposed application to stochastic surveillance, the meeting
time has direct applications to information flow in distributed
networks~\cite{SRE-TB:16}, self-stabilization of tokens~\cite{AI-MJ:90}
and measuring similarity of objects~\cite{GJ-JW:02}.
\begin{figure}
  \centering
  \includegraphics[width = 0.5\textwidth]{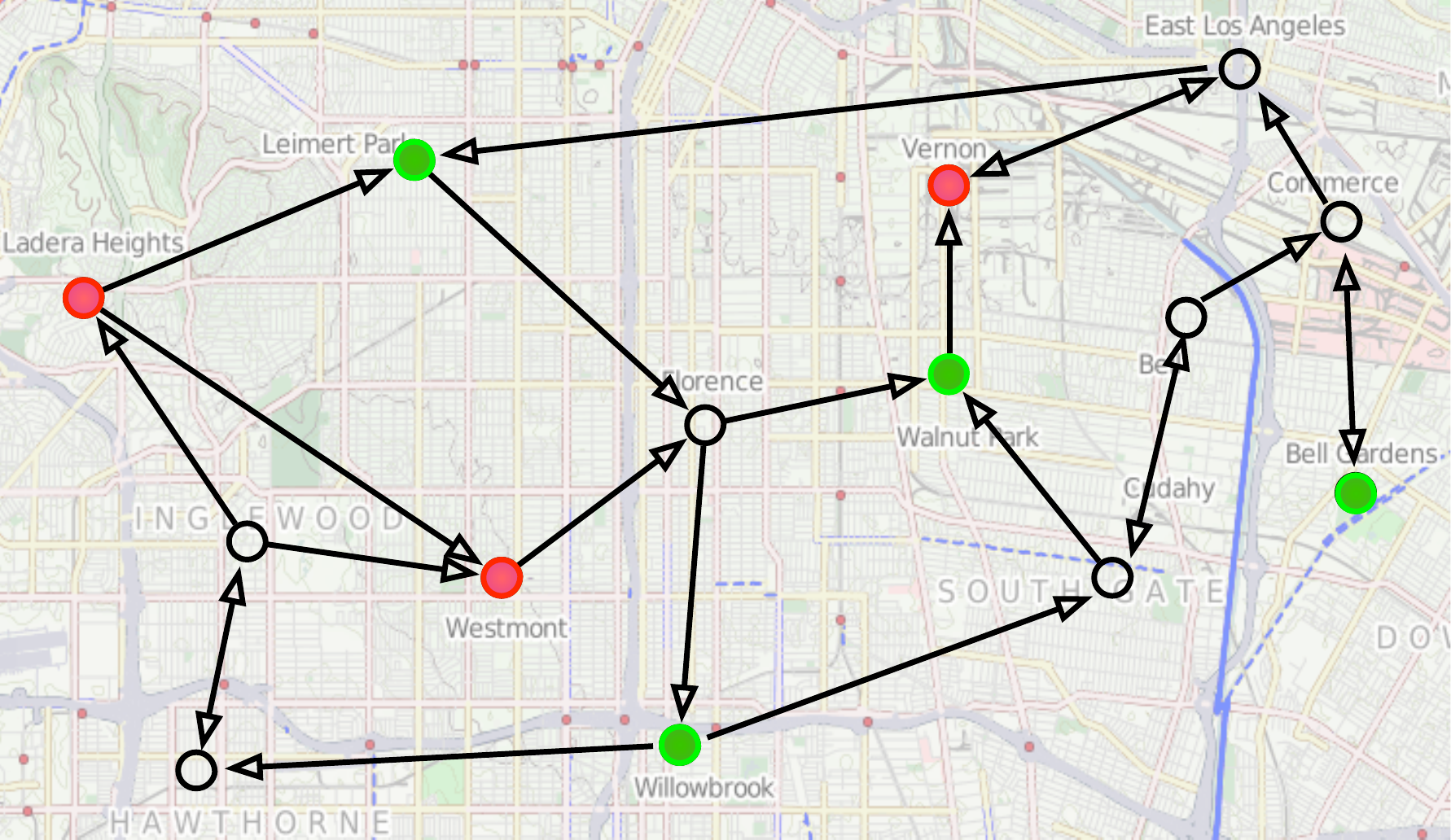}
  \caption{Multiple pursuers (green) and multiple evaders (red) performing random walks on a digraph.}
\end{figure}
\subsection{Literature review}
Early interest in meeting times was motivated by applications to self-stabilizing token management schemes~\cite{PT-PW:91}. In a token management scheme, only one of the many processors on a distributed network is enabled to change state or perform a particular task, and this processor is said to possess the token. If two tokens meet then they collapse into a single token. Israeli and Jalfon suggest a scheme in which the token is passed randomly to a neighbor~\cite{AI-MJ:90}. In a general connected, undirected, n-vertex graph they were able to obtain an exponential bound for the meeting time of two tokens in terms of the maximum degree and the diameter of the graph. Coppersmith et al~\cite{DC-PR-PW:93} improved the bound to be polynomial in the number of nodes by bounding the meeting time in terms of the pairwise hitting time from the starting nodes of the tokens to hidden vertices. In~\cite{DC-PR-PW:93, AI-MJ:90, PT-PW:91} the notion of the meeting time involves the tokens being moved asynchronously by an adversary whose objective is to maximize meeting time by playing only one of the two tokens. Bshouty et al~\cite{NHB-LH-JWG:99} obtain a bound on the meeting time of several such tokens in terms of the meeting time of two tokens. Bounds for meeting times of two identical independent continuous-time reversible Markov chains in terms of the pairwise hitting times of the chain are mentioned in the work by Aldous~\cite{DJA:91}. Several variations of ``cat-mouse'' games are discussed in~\cite{DA-JAF:02} wherein bounds are obtained in terms of the pairwise hitting time or the variation-threshold time (a measure of rate of convergence to stationary distributions) depending on the Markov chains being discrete-time or continuous-time.

Several other metrics have been used to describe single and multiple random walks of graphs. Here we describe three of the most relevant quantities. A closely related metric is the hitting time which is the time taken by a single random walker to travel between nodes of a graph. The hitting time of a finite irreducible Markov chain first appeared in~\cite{JGK-JLS:76}, however, it was rediscovered for finite reversible Markov chains in~\cite{AZB-ARK:89}. Several bounds have been obtained for the hitting time for various graph topologies~\cite{JJH:13,ML-GL:02}. Many closed-form formulations exist to compute the hitting time for a single random walker~\cite{JJH:13,SK:10,RP-PA-FB:14b}. The authors in~\cite{RP-AC-FB:14k} obtain a closed-form solution for the hitting time of multiple random walkers. Another related notion is that of cover times which is the expected time taken by random walkers to hit every node on the graph~\cite{AZB-ARK:89, RE-TS:11}. There are several works relating the cover time to the hitting times of a Markov chain~\cite{YN-HO-KS-MY:10}; many of these works bound the cover time in terms of worst-case pairwise hitting times. Finally, the coalescence time of multiple random walkers is a quantity which is widely studied especially in the context of voter models~\cite{CC-RE-HO-TR:13,JTC:89}. Two random walks coalesce into one when they share the same node. Bounds for the coalescence time in terms of the worst case pairwise hitting times are discussed in~\cite{DA-JAF:02}. More recently, Cooper et al bounded the coalescence time using the second largest eigenvalue of the transition matrix~\cite{CC-RE-HO-TR:13}.

Stochastic vehicle routing strategies have the desirable property that an intruder can not predictably plan a path to avoid surveillance agents. The authors in~\cite{JG-JB:05,KS-DMS-MWS:09} use Markov chain Monte Carlo methods to design surveillance strategies. Minimizing the mean hiting time by introducing a novel convex optimization formulation is used to design strategies in~\cite{RP-PA-FB:14b}. The notion of group hitting time for multiple random walkers is used in optimizing transition matrices for multiple agents in~\cite{RP-AC-FB:14k, AC-RP-FB:14k-conference}. In~\cite{PA-FB:15e} the mean hitting time in conjunction with multiple parallel CUSUM algorithms at various nodes of interest in the graph are used to describe a policy which ensures quickest average time to detection of anomalies. In the strategies mentioned in these works the intruder/anomaly is assumed to be stationary. The policies for surveillance derived in this paper are for mobile intruders modeled by Markov chains.

\subsection{Contributions}
Given the above, there are several contributions in this paper. First, we provide a set of necessary and sufficient conditions which characterize when the meeting times between a single pursuer and a single evader is finite for arbitrary Markov chains. To the best of our knowledge the bounds in the literature were obtained for meeting times between ergodic Markov chains where the meeting times are guaranteed to be finite. However we extend the notion to generic transition matrices as opposed to equal neighbor models which are studied in many of the works mentioned above, and we discuss at length when the meeting times are finite based on the existence of walks of equal length to common nodes. Second, we provide a closed-form solution to the meeting time of two independent Markov chains by utilizing the Kronecker product of the transition matrices. Both these results are obtained using a technical approach which takes advantage of the properties of Kronecker products of graphs. We further use this closed-form expression to perform comparisons with existing bounds in the literature. Indeed we see that the bounds are very conservative for most graphs. Third, we provide a set of sufficient conditions in terms of the absorbing classes of the pursuer and evader chains which guarantee finite meeting times. Fourth, we extend the treatment to multiple pursuers and multiple evaders. Finally, we obtain conditions for the meeting times between two continuous-time Markov chains to be finite and provide closed-form results for this case, and further extend it to multiple pursuer and evader groups when dictated by multiple transition rate matrices.

To the best of our knowledge, this paper provides the first closed-form solutions for the computation of the meeting time between two Markov chains for both discrete-time and continuous-time time indices. Two closely related references are as follows: first, a system of equations for computing meeting times for independent identical random walks on graphs with irreducible transition matrices, where the transition matrices are limited to equal-neighbor weights, were obtained using Laplace transform techniques in~\cite{TO:15}. Second, Kronecker products and vectorization techniques have been used to compute the Simrank of information networks which has interpretations in terms of meeting times~\cite{CL-JH-GH-XJ-YS-YY-TW:10}. Our work is different in several ways. First, we consider absolutely generic transtion matrices which need not be identical. Second, we present expressions here which are valid for reducible transition matrices. Third, we present meeting time expressions for the case of multiple pursuers which would correspond to multiple infecting particles in~\cite{TO:15}. Finally, we provide insight into when meeting times are finite by connecting this notion to the existence of walks on the Kronecker graph.

\subsection{Organization}
This paper is organized as follows. In Section~\ref{sec:Notation} we introduce notation that is used throughout the paper and review useful concepts. In Section~\ref{sec:Single_pursuer_evader} we introduce our formulation for the meeting times of pairs of Markov chains, and also define sets of pairs of matrices for which finite meeting times exist. In Section~\ref{sec:multiple_pursuers_evaders} we extend the notion of the meeting time to multiple pursuers and evaders. In Section~\ref{sec:CTMC_meeting} we obtain closed-form expressions for continuous-time Markov chains. Finally, in Section~\ref{sec:conclusion} we present conclusions.

%\clearpage
\section{Notation} \label{sec:Notation}
In this section we define various useful concepts and notation. We provide an 
overview of some facts and results on Markov chains and introduce notation that will be used throughout the paper to deal with vectors and matrices, the Kronecker product, and discuss Markov chains on graphs.
\subsection{Markov chains} \label{subsec:MarkChains}
A \emph{Markov chain} is a sequence of random variables taking value in the \emph{finite} set $\until{n}$ with the Markov property, namely that the future state depends only on the present state.

Let $X_k \in \until{n}$ denote the location of a random walker at time $k \in \{0,1,$ $2,\dots\}$, then a discrete-time Markov chain is \emph{time-homogeneous} if $\mathbb{P}[X_{n+1}=j\,|\,X_n =  i]=\mathbb{P}[X_{n}=j\,|\,X_{n-1} = i]=p_{i,j}$, where $P =[p_{i,j}] \in \mathbb{R}^{n  \times n}$ is the \emph{transition matrix} of the Markov chain. By definition, each transition matrix $P$ is row-stochastic,  i.e., $P \vectorones[n] = \vectorones[n]$. The period of a state is defined as the greatest common divisor of all $t$ such that $\{t\geq 1\,|\,\mathbb{P}[X_t=i\,|\,X_0=i]\neq 0\}$. A state whose period is one is referred to as an \emph{aperiodic} state. It can be shown that in a \emph{communicating class}(defined below) all states share the same period. For more details on discrete-time Markov chains refer \cite[Chapter 8]{CDM:01}.

Let $X_t \in \until{n}$ denote the location of a random walker at time $t\in\mathbb{R}^{+}$, then a continuous-time Markov chain is \emph{time-homogeneous} if $\mathbb{P}[X_{t^{'}+t}=j\,|\,X_{t^{'}} =  i]=p^t_{i,j}$ for all $t \geq 0, t^{'} \geq 0$, where $P(t) = p_{i,j}^{t} \in \mathbb{R}^{n  \times n}$ is the transition matrix of the Markov chain. The evolution of the continuous-time Markov chain is determined by the solution to the first-order differential equation $P^{'}(t) = P(t) Q$, where $P(t)=p^{t}_{i,j}$ and $Q$ is a \emph{transition rate matrix} which satisfies $Q\vectorones[n]=\vectorzeros[n]$. For more details on continuous-time Markov chains refer \cite[Chapters 2 \& 3]{JRN:97}. A continuous-time Markov chain is said to be \emph{ergodic} if it is irreducible.

Consider two states $i$ and $j$ belonging to a Markov chain. We say $i$ \emph{communicates} with $j$ if  $p^t_{i,j} \neq 0$ for some $t>0$. For a subset of states $X \subset \until{n}$, we say that $X$ forms a \emph{communicating class} if for every state $i,j \in X$ the states communicate with each other, i.e $\mathbb{P}[X_{t}=j\,|\,X_0=i]\neq 0$ and $\mathbb{P}[X_{t^{'}}=i\,|\,X_0=j]\neq 0$ for some $t,t'\geq0$. An \emph{absorbing class} $A$ of a Markov chain is a communicating class such that the probability of escaping the set is zero, i.e $p^t_{i,j} = 0$ for all $t>0$ for all $i \in A, j\notin A$. If a communicating class is not absorbing, then it is called a \emph{transient class}. In general, a Markov chain will have multiple absorbing and transient classes. If a Markov chain has only a single absorbing class then it is referred to as a \emph{single absorbing Markov chain}. 

If a Markov chain is single absorbing, then a unique stationary distribution $\bs \pi$ exists. The vector $\bs \pi \in \mathbb R ^{n \times 1}$ is a \emph{stationary distribution} of a discrete-time Markov chain with transition matrix $P$ if $\sum_{i=1}^n \bs \pi_i = 1$ and $\bs \pi^\top P = \bs \pi^\top$ and of a continuous-time Markov chain with transition rate matrix $Q$ if $\sum_{i=1}^n \bs \pi_i = 1$ and $\bs \pi^\top Q =0$. A Markov chain is \emph{irreducible} if the absorbing class is the entire set of states $\until{n}$. A discrete-time Markov chain is said to be \emph{ergodic} if it is irreducible and aperiodic.

\subsection{Matrix notation} \label{subsec:KronProps}
We use the notation $A = [a_{i_1\dots i_l,j_1\dots j_m}]$ to denote the matrix generated by elements $a_{i_1\dots i_l,j_1\dots j_m}$, where the rows of $A$ are determined by cycling through indices $i_{l}$ followed by $i_{l-1}$ and so on until $i_1$, and the columns of $A$ are determined by cycling through indices $j_{m}$ followed by $j_{m-1}$ and so on until $j_1$.  For example, consider $i_1,i_2,j_1,j_2  \in \until{n}$, then 

\begin{align*} 
  A  =[a_{i_1 i_2, j_1 j_2}] = \left[
  \begin{array}{cccccccc}
    a_{11,11} & a_{11,12}  & \dots & a_{11,1n} & a_{11,21} &\dots & a_{11,nn} \\ 
    a_{12,11} & a_{12,12}  & \dots & a_{12,1n} & a_{12,21} &\dots & a_{12,nn} \\ 
    \vdots & \vdots & \dots & \dots & \dots & \vdots & \vdots \\
    a_{1n,11} & a_{1n,12} & \dots & a_{1n,1n} & a_{1n,21} &\dots & a_{1n,nn} \\
    a_{21,11} & a_{21,12} & \dots & a_{21,1n} & a_{21,21} &\dots & a_{21,nn} \\
    % \vdots & \vdots & \dots & \vdots \\
    % a_{2n,1} & a_{22,2} & \dots & \vdots \\
    \vdots & \vdots & \dots & \dots & \dots & \vdots & \vdots \\
    a_{nn,11} & a_{nn,12} & \dots & a_{nn,1n} & a_{nn,21} & \dots & a_{nn,nn}
  \end{array} \right].
\end{align*}
For the case where $A = [a_{i,j}]$ this corresponds to the classic interpretation 
with element $a_{i,j}$ in the $i$-th row and $j$-th column of $A$. We use the notation $\diag{a}$ to indicate the diagonal matrix generated by vector $\bs a$ and 
$\Vect{A}$ to indicate the vectorization of a matrix $A \in \mathbb{R}^{n 
  \times m}$ where $\Vect{A} = [A(1,1)$,$\dots$,$A(n,1)$,$A(1,2)$,$\dots$,\newline$A(n,2)$,
$\dots$,$A(1,m)$ ,$\dots,A(n,m)]^\top$. In other words, even if we define $A$ as $A =  [a_{i_1i_2,j_1j_2}]$, the vector $\Vect{A} = \Vect{[a_{i_1i_2,j_1j_2}]}$ is simply a stacking of the columns of $A$.
 
Let $I_n \in \mathbb{R}^{n\times n}$ denote the identity matrix of size $n$, $\vectorones[n]\in \mathbb{R}^{n\times 1}$ denote the vector of ones of size $n$, and $\vect{e}_1,\vect{e}_2,\dots,\vect{e}_n \in \mathbb{R}^{n\times 1} $ denote vectors with unity in the row indicated by the subscript. We define a generalized Kronecker delta function $\delta_{i_1i_2\dots i_l,j_1j_2\dots j_m}$, by
\begin{equation*}
  \delta_{i_1\dots i_l,j_1j_2\dots j_m} = 
  \begin{cases}
    1, & \text{if }\exists~l',m' \text{ such that } i_{l'} = j_{m'}\text{ for any }1\leq l' \leq l, 1\leq m' \leq m,\\ 
    0, & \text{otherwise}.
  \end{cases}
\end{equation*}
We use the subscript p,~e or superscript $(\textnormal{p}),(\textnormal{e})$ to delineate between quantities associated with pursuers and evaders.

We are now ready to review some useful facts about Kronecker products. The Kronecker product, represented by the symbol $\kron$, of two matrices $A \in \mathbb{R}^{n \times m}$ and $B \in \mathbb{R}^{q \times r}$ is an $nq \times mr$ matrix given by
\begin{align*} 
  A\kron B = \left[
  \begin{array}{ccc}
    a_{1,1} B & \dots & a_{1,m} B \\ 
    \vdots& \ddots  &\vdots \\
    a_{n,1} B &  \ddots& a_{n,m}B 
  \end{array} \right].
\end{align*}
The Kronecker product is bilinear and has many useful properties,
two of which are summarized in the following Lemma; see~\cite[Chapter 4]{RAH-CRJ:94}
for more information.
\begin{lemma}[Properties of the Kronecker product] \label{def:KronProps1}
  Given the matrices $A,B,C$ and $D$, the following relations hold for the Kronecker product.
  \begin{enumerate}
  \item $(A \kron B) (C \kron D) = (A  C) \kron (B  D)$,
  \item $(B^\top \kron A) \Vect{C}= \Vect{ACB}$,
  \end{enumerate}
  where it is assumed that the matrices are of appropriate dimension when
  matrix multiplication or addition occurs.
\end{lemma}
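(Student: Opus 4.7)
The plan is to prove the two identities directly from the block structure that defines the Kronecker product, treating part (1) as the workhorse and then reducing part (2) to it.

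For part (1), I would use the block form of $A \kron B$, whose $(i,j)$ block of size $q \times r$ is simply $a_{ij} B$. Multiplying $(A\kron B)(C \kron D)$ as a block matrix product, the $(i,l)$ block equals
\[
\sum_{k}(a_{ik} B)(c_{kl} D) \;=\; \Bigl(\sum_k a_{ik} c_{kl}\Bigr) BD \;=\; (AC)_{il}\, BD,
\]
which is exactly the $(i,l)$ block of $(AC) \kron (BD)$. Compatibility of all the dimensions is what lets the blocks align, so I would make sure to state the assumed sizes up front ($A \in \mathbb{R}^{n\times m}$, $B \in \mathbb{R}^{q \times r}$, $C \in \mathbb{R}^{m\times s}$, $D \in \mathbb{R}^{r\times t}$) before doing the block calculation.

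For part (2), my plan is to decompose $C$ using the standard basis and then invoke part (1). Writing $C = \sum_{k,l} c_{kl}\, \vect{e}_k \vect{e}_l^\top$ and using bilinearity, it suffices to show $\Vect{A \vect{e}_k \vect{e}_l^\top B} = (B^\top \kron A)(\vect{e}_l \kron \vect{e}_k)$. The left-hand side is the vectorization of the rank-one matrix $(A\vect{e}_k)(B^\top \vect{e}_l)^\top$, and for any vectors $u,v$ one has $\Vect{uv^\top} = v \kron u$ (which follows directly from the column-stacking definition of $\Vect{\cdot}$). Hence the left-hand side equals $(B^\top \vect{e}_l) \kron (A \vect{e}_k)$, and by part (1) this is $(B^\top \kron A)(\vect{e}_l \kron \vect{e}_k)$. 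Summing over $k,l$ with coefficients $c_{kl}$ and noting that $\sum_{k,l} c_{kl}(\vect{e}_l \kron \vect{e}_k) = \Vect{C}$ yields the claim.

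The only place that requires care is part (2), because the convention for $\Vect{\cdot}$ (columns stacked in order) has to be matched against the index-ordering convention declared earlier for Kronecker products; a sign/transposition error in the basis identity $\Vect{uv^\top} = v \kron u$ versus $u \kron v$ would break everything. I would therefore verify that identity explicitly from the column-stacking definition before using it. Beyond that bookkeeping, the argument is routine and no deeper tool is needed.
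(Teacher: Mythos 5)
Your proof is correct: the block computation for the mixed-product property and the reduction of the vectorization identity to rank-one matrices via $\Vect{uv^\top}=v\kron u$ are both sound, and the column-stacking convention you flag does indeed match the paper's definition of $\Vect{\cdot}$. Note that the paper does not prove this lemma at all—it is stated as a known fact with a citation to Horn and Johnson, Chapter 4—so your argument simply supplies the standard textbook derivation of that cited result.
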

\subsection{Markov chains on graphs}
In this paper, for discrete-time Markov chains we consider weighted digraphs $\G = (\V,E,P)$ with node sets $V :=\until{n}$, edge set $E \subset \V \times \V$, and associated transition matrix $P=[p_{i,j}]$ with the property that $p_{i,j}\geq0$ if $(i,j) \in E$ and $p_{i,j}=0$ otherwise. The weight of the edge $(i,j)$ is interpreted as the weight associated with the probability of transition from node $i$ to node $j$. The nodes of the graph are equivalent to the states of the Markov chain. We say there exists a \emph{walk} of \emph{length} $\ell$ from node $i_1$ to node $i_l$ if there exists a sequence of nodes $i_2,\dots,i_{\ell-2}$ such that $p_{i_k,i_{k+1}}>0$ for $1\leq k \leq \ell-1$.

In this paper, for continuous-time Markov chains we consider weighted digraphs $\G = (\V,E,Q)$ with node sets $\V :=\until{n}$, edge set $E \subset \V \times \V$, and associated transition rate matrix $Q=[Q_{i,j}]$ with the property that $q_{i,j}\geq0$ if $(i,j) \in E$, $q_{i,j}=0$ otherwise and $q_{i,i} = -\sum_{j\in V}q_{i,j}$. The weight of the edge $(i,j)$ is interpreted as the rate of transition from node $i$ to node $j$. One could also look at the entry $-1/q_{i,i}$ as the average time at which the walker leaves node $i$ and $1/q_{i,j}$ as the average time for a jump from $i$ to $j$. We say there exists a walk from node $i_1$ to node $i_l$ if there exists a sequence of nodes $i_2,\dots,i_{\ell-1}$ such that $q_{i_l,i_{l+1}}>0$ for $1\leq l \leq \ell-1$.

%\clearpage
\section{Single pursuer and single evader} \label{sec:Single_pursuer_evader}
In this section, we formulate the meeting time between two discrete-time Markov chains. We provide necessary and sufficient conditions for the finiteness of the meeting times given any initial starting positions on the graph. We specify certain sets of pairs of transition matrices where finite meeting times are guaranteed to exist and discuss when it is appropriate to define the notion of a mean meeting time. Finally, we compare the exact values obtained with this expression to some of the bounds from the literature on meeting times.
\subsection{The meeting time of two Markov chains}
Consider the pursuer and evader performing random walks on a set of nodes $\V :=\until{n}$  with digraphs $\G_\textnormal{p} =(\V,E_\textnormal{p},P_\textnormal{p})$, $\G_\textnormal{e}=(\V,E_\textnormal{e},P_\textnormal{e})$, edge sets $E_\textnormal{p},E_\textnormal{e}\subset \V \times \V$, and transition matrices $P_\textnormal{p}$, $P_\textnormal{e}$. The matrix $P_\textnormal{p}$ satisfies $p_{i,j}^{(\textnormal{p})} \geq 0$ if $(i,j) \in E_\textnormal{p}$ and $p_{i,j}^{(\textnormal{p})} = 0$ if $(i,j) \notin E_\textnormal{p}$. Similarly $P_\textnormal{e}$ satisfies similar properties to be a well-defined transition matrix on $\G_\textnormal{e}$.  

Let $X_t^{(\textnormal{p})},X_t^{(\textnormal{e})} \in\until{n}$  be the location of the two agents at time $t\in\{0,1,2,\dots\}$.
\newline
For any two start nodes $i,j$, the \emph{first meeting time from $i$ and $j$}, denoted by  $T_{i,j}$, is the first time that both random walkers meet when starting from nodes $i$ and $j$, respectively. More formally,
\begin{equation*}
  T_{i,j} =\min \setdef{t\geq 1}{X_t^{(\textnormal{p})} = X_t^{(\textnormal{e})}\text{ given that } X_0^{(\textnormal{p})} = i \text{ and } X_0^{(\textnormal{e})}=j}.
\end{equation*}
Note that the first meeting time can be infinite. It is easy to
construct examples in which the two agents never meet. Let
$\fmt_{i,j} = \mathbb{E}[T_{i,j}]$ be the expected first
meeting time starting from nodes $i$ and $j$. For the sake of brevity, we shall refer to the expected first meeting time as just the meeting time.

\begin{theorem}[The meeting time of two Markov chains]\label{thm:necessary_sufficient_finite}
  Consider two Markov chains with transition matrices $P_\textnormal{p}$ and
  $P_\textnormal{e}$ defined on a digraph $\G$ with nodeset $V = \until{n}$.  The
  following statements are equivalent:
  \begin{enumerate}
  \item for each pair of nodes $i,j$, the meeting time
    $\fmt_{i,j}$ from nodes $i$ and $j$ is finite,
  \item for each pair of nodes $i,j$, there exists a node $j$ and
    a length $\ell$ such that a walk of length $\ell$ exists
    from $i$ to $k$ and a walk of length $\ell$ exists from
    $j$ to $k$,
  \item for each pair of nodes $i,j$, there exists a
    walk in the digraph associated with the stochastic matrix \textnormal{$P_\textnormal{p} \kron
    P_\textnormal{e}$} from $(i,j)$ to a node $(k,k)$, for some
    $k\in\until{n}$, and

  \item the sub-stochastic matrix \textnormal{$(P_\textnormal{p} \kron P_\textnormal{e})E$}
    is convergent and the vector of meeting times is given by
    \textnormal{
    \begin{align} \label{eq:Vector2agent}
      \Vect{\FMT} = (I_{n^2}-(P_\textnormal{p}\kron P_\textnormal{e})E)^{-1} \vectorones[n^2], 
    \end{align}}
    where $\FMT\in \mathbb{R}^{n \times n}$ and $E \in \mathbb{R}^{n^2 \times n^2}$ is a binary diagonal matrix with diagonal entries $\vectorones[n^2] -\Vect{I_n}$.
  \end{enumerate}
\end{theorem}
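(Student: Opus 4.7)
My plan is to prove the chain of implications (ii) $\Leftrightarrow$ (iii) $\Rightarrow$ (iv) $\Rightarrow$ (i) $\Rightarrow$ (iii). The equivalence (ii) $\Leftrightarrow$ (iii) is purely combinatorial: by Lemma~\ref{def:KronProps1}(1), $(P_\textnormal{p}\kron P_\textnormal{e})^\ell = P_\textnormal{p}^\ell \kron P_\textnormal{e}^\ell$, so the $((i,j),(k,l))$-entry of $(P_\textnormal{p}\kron P_\textnormal{e})^\ell$ equals $(P_\textnormal{p}^\ell)_{ik}(P_\textnormal{e}^\ell)_{jl}$. A length-$\ell$ walk from $(i,j)$ to $(k,l)$ in the Kronecker digraph therefore exists if and only if length-$\ell$ walks connect $i$ to $k$ in $\G_\textnormal{p}$ and $j$ to $l$ in $\G_\textnormal{e}$; specializing to $l=k$ gives the equivalence.

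For (iii) $\Rightarrow$ (iv), I would first derive the one-step recursion
\begin{equation*}
  \fmt_{i,j} \;=\; 1 + \sum_{k\neq l} p^{(\textnormal{p})}_{i,k}\, p^{(\textnormal{e})}_{j,l}\, \fmt_{k,l}
\end{equation*}
by conditioning on the joint state at time one and using $T_{i,j}\ge 1$. Vectorizing via Lemma~\ref{def:KronProps1}(2) and noting that right-multiplication by the diagonal $E$ zeros out the columns indexed by diagonal pairs $(k,k)$ yields the linear system $\Vect{\FMT} = \vectorones[n^2] + A\,\Vect{\FMT}$ with $A := (P_\textnormal{p}\kron P_\textnormal{e})E$. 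The crux is then to show $A$ is convergent. The row sums satisfy $(A\vectorones[n^2])_{(i,j)} = 1 - \sum_{k} p^{(\textnormal{p})}_{i,k} p^{(\textnormal{e})}_{j,k}$, so the rows with strict deficit are exactly the states from which the diagonal is reachable in one step. A standard Perron--Frobenius consequence for nonnegative substochastic matrices gives $\rho(A)<1$ iff every state can reach a deficit row via a directed walk in the graph of $A$. Condition (iii) supplies a walk in the Kronecker digraph from $(i,j)$ to some $(k,k)$; truncating at its first diagonal visit yields a walk whose interior vertices are all off-diagonal (hence a walk in the graph of $A$) and whose penultimate vertex must be a deficit row. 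This secures convergence, and $\Vect{\FMT} = (I_{n^2}-A)^{-1}\vectorones[n^2]$ follows.

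The implication (iv) $\Rightarrow$ (i) follows from the Neumann expansion $(I-A)^{-1}\vectorones[n^2] = \sum_{k\ge 0} A^k \vectorones[n^2]$ combined with the identity $(A^k\vectorones[n^2])_{(i,j)} = \prob(T_{i,j}>k)$, since $A^k_{(i,j),(k',l')}$ equals the probability that the joint chain sits at $(k',l')$ at time $k$ without having met at any of the times $1,\dots,k$. The tail-sum formula then gives $\fmt_{i,j} = \sum_{k\ge 0}\prob(T_{i,j}>k) = \expt[T_{i,j}]<\infty$. Finally, (i) $\Rightarrow$ (iii) is the contrapositive: if no Kronecker-graph walk from $(i,j)$ ever reaches the diagonal, then the joint chain started at $(i,j)$ never meets and $\fmt_{i,j}=\infty$. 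I expect the main technical difficulty to be the convergence argument for $\rho(A)<1$, specifically the need to distinguish walks in the Kronecker graph (which may transit or terminate on the diagonal) from walks in the graph of $A$ (whose edges forbid transitions into the diagonal), and to handle diagonal starts $(i,i)$, which are not absorbing in the meeting-time chain but still correspond to valid rows of $A$.
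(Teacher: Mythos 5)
Your proof is correct and follows essentially the same route as the paper's: the same one-step recursion and Kronecker vectorization, the same reachability-to-the-diagonal characterization via $(P_\textnormal{p}\kron P_\textnormal{e})^\ell = P_\textnormal{p}^\ell\kron P_\textnormal{e}^\ell$, and the same substochastic-convergence criterion. If anything, you are more careful than the paper on two points it glosses over---truncating the Kronecker-graph walk at its first diagonal visit so that it becomes a walk in the graph of $(P_\textnormal{p}\kron P_\textnormal{e})E$ ending at a deficit row, and justifying (iv) $\Rightarrow$ (i) via the tail-sum identity $(A^k\vectorones[n^2])_{(i,j)}=\prob(T_{i,j}>k)$ rather than merely asserting that solvability of the linear system implies finiteness of the expectations.
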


\begin{proof}
  For the nodes $i$ and $j$, the first meeting time satisfies the recursive formula
  \begin{equation*}
    T_{i,j} = 
    \begin{cases}
      1, & \text{w.p.}~\sum_k p_{i,k}^{(\textnormal{p})}p_{j,k}^{(\textnormal{e})},\\ 
      T_{k_1,h_1}+1, & \text{w.p.}~ p_{i,k_1}^{(\textnormal{p})}p_{j,h_1}^{(\textnormal{e})}, k_1\neq h_1.
    \end{cases}
  \end{equation*}
  Taking the expectation we have
  \begin{equation*}
    \begin{split}
      \mathbb{E}[T_{i,j}] &=\sum_{k} p_{i,k}^{(\textnormal{p})}p_{j,k}^{(\textnormal{e})} + \sum_{k_1 \neq h_1 } p_{i,k_1}^{(\textnormal{p})}p_{j,h_1}^{(\textnormal{e})} (\mathbb{E}[T_{k_1,h_1}]+1),\\
      &= \sum_{k_1}\sum_{h_1}p_{i,k_1}^{(\textnormal{p})} p_{j,h_1}^{(\textnormal{e})} + \sum_{k_1 \neq h_1 } p_{i,k_1}^{(\textnormal{p})}p_{j,h_1}^{(\textnormal{e})} \mathbb{E}[T_{k_1,h_1}], \\
      &=1 + \sum_{k_1 \neq h_1 } p_{i,k_1}^{(\textnormal{p})}p_{j,h_1}^{(\textnormal{e})} \mathbb{E}[T_{k_1,h_1}].
    \end{split}
  \end{equation*}
  Let $\fmt_{i,j} = \mathbb{E}[T_{i,j}]$ for every $i,j \in \until{n}$ and let $\FMT=[\fmt_{i,j}]$. Note that the entries of $\FMT$ can be written as 
  \begin{equation*}
    \begin{split}
      \fmt_{i,j} &= 1 + \sum_{k_1\neq h_1}  p_{i,k_1}^{(\textnormal{p})} p_{j,h_1}^{(\textnormal{e})} \fmt_{k_1,h_1}, \\
      \implies \fmt_{i,j} &= 1 + \sum_{k_1=1}^n p_{i,k_1}^{(\textnormal{p})} \sum_{h_1=1}^n  \fmt_{k_1,h_1} p_{j,h_1}^{(\textnormal{e})}  - \sum_{k=1}^n p^{(\textnormal{p})}_{i,k}p^{(\textnormal{e})}_{j,k}\fmt_{k,k},\\
      \implies \FMT &= \vectorones[n] \vectorones[n]^\top + P_\textnormal{p}(\FMT-\FMT_d)P_\textnormal{e}^\top,
    \end{split}
  \end{equation*}
  where $\FMT_d \in \mathbb{R}^{n\times n}$ is a diagonal matrix with only the diagonal elements of $\FMT$ . We have used the property that $(ABC)_{i,j} = \sum_kA_{i,k}\sum_lB_{k,l}C_{l,j}$ to obtain the equation in matrix form. Rewriting the equation in vector form and using Lemma~\ref{def:KronProps1} gives
  \begin{equation*}
    \begin{split}
      \Vect{\FMT} &= \vectorones[n^2] + (P_\textnormal{p}\kron P_\textnormal{e})(\Vect{\FMT}  -\Vect{\FMT_d}),\\
      \Vect{\FMT} &= \vectorones[n^2] + (P_\textnormal{p}\kron P_\textnormal{e}) (I_{n^2}-\Vect{I_n}) \Vect{\FMT}\\ 
      \Vect{\FMT} &= \vectorones[n^2] + (P_\textnormal{p}\kron P_\textnormal{e}) E \Vect{\FMT}
    \end{split}
  \end{equation*}
  If the matrix $I_{n^2}-(P_\textnormal{p}\kron P_\textnormal{e})E$ is invertible then we have a unique solution to the meeting times. We shall now show that the finiteness of meeting times as in $(i)$ is equivalent to the existence of walks of equal length to common nodes as mentioned in $(ii)$ and in $(iii)$, which guarantees invertibility of $I_{n^2}-(P_\textnormal{p}\kron P_\textnormal{e})E$ in $(iv)$.

\begin{figure}[t]
  \includegraphics[width = 0.99\textwidth]{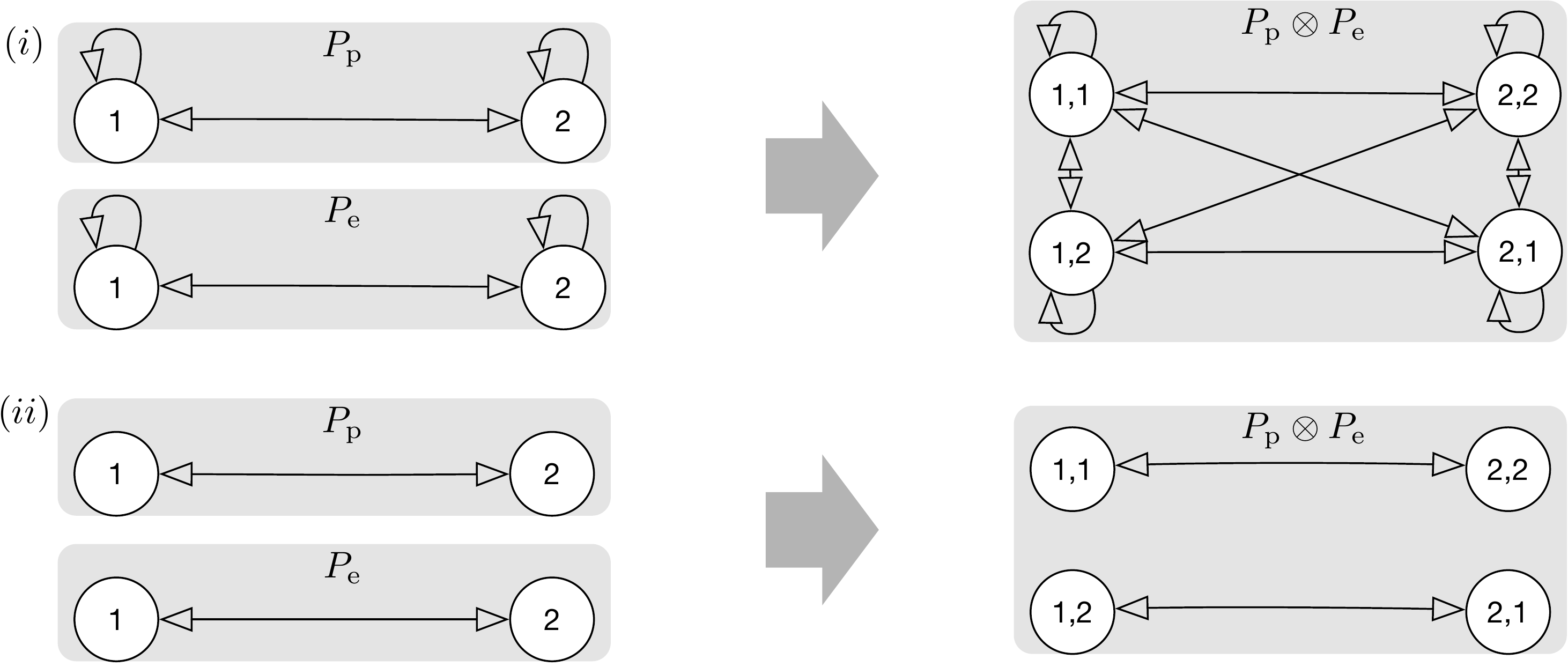}
  \caption{The pursuer-evader pair in $(i)$ has finite meeting times as every node has a walk to the common nodes $(1,1)$ and $(2,2)$ in the Kronecker graph. However, in $(ii)$ there exists no walks to common nodes from $(1,2)$ and $(2,1)$.}
\end{figure}

  We start by proving that $(i) \implies (ii)$. If we assume that $(i) \centernot \implies (ii)$, then there exists a pair of nodes $i$ and $j$ such that the meeting time is finite and there exists no walk of equal length to any node in $V$. However if there exists no walk of equal length to a common node, then the agents never meet and the meeting time is always infinite. Hence by contradiction $(i) \implies (ii)$.

  Next, we show that $(ii) \iff (iii)$. The Kronecker product of the transition matrices gives a joint transition matrix for the agents over the set of nodes $V \times V$. The $(i,j)$ entry of the matrix $P_\textnormal{p} \kron P_\textnormal{e}$  corresponds to the states $X^{(\textnormal{p})}=i$ and $X^{(\textnormal{e})}=j$ \cite{PMW:62}. The statement $(ii)$ ensures the existence of a node $k$ for every pair $(i,j)$ which is reachable by a walk of equal length from $i$ in $P_\textnormal{p}$ and $j$ in $P_\textnormal{e}$. This condition is equivalent to the node $(k,k)$ being reachable from the pair $(i,j)$ on the Kronecker product of the two Markov chains \cite[Proposition 1]{FH-CATJ:66}.

  Next, we show $(iii) \implies (iv)$. The stochastic matrix $P_\textnormal{p} \kron P_\textnormal{e}$ has a walk from any node $(i,j)$ to some node $(k_1,h_1)$ where $\mathbb{P}[X^{(\textnormal{p})}=k,X^{(\textnormal{e})}=k\,|\, X^{(\textnormal{p})}=k_1,X^{(\textnormal{e})}=h_1]\neq 0$ as there exists a walk $(i,j) \rightarrow (k,k)$. Note that post-multiplying the square matrix $P_\textnormal{p} \kron P_\textnormal{e}$ by $E$ corresponds to setting the columns associated with nodes of the form $(k,k)$ to $\vectorzeros[n^2]$. Thus the row associated with $(k,h)$ has row-sum less than 1. Therefore every node $(i,j)$ has a walk to a node whose row-sum is less than 1 which implies that the matrix $(P_\textnormal{p} \kron P_\textnormal{e}) E$ is convergent by virtue of Lemma~\ref{lem:substochastic} (see Section~\ref{sec:appendix}).

  From this we obtain equation~\refp{eq:Vector2agent}. Since $(iii)$ guarantees the existence of $(I_{n^2}-(P_\textnormal{p}\kron P_\textnormal{e}))^{-1}$, we prove that $(iii) \implies (iv)$.

  Note that the existence of $\Vect{\FMT}$ in $(iv)$ gives $ (iv) \implies (i)$. Thus we have shown that $(i) \implies (ii) \iff (iii) \implies (iv) \implies (i)$. Hence the four conditions are equivalent.
\end{proof}
The above necessary and sufficient conditions give the most general
set of pairs of matrices for which finite meeting times
exist. These conditions are in practice difficult to use for designing transition matrices. Hence, we introduce a few sets of pairs of matrices for which the meeting times are guaranteed to be finite.

\subsection{Sufficient conditions for finiteness}
Consider the following sets of pairs of matrices:
\begin{description}

\item[$\subscr{\PP}{finite}$: finite meeting times.]
  Let $\subscr{\PP}{finite}$ be the set of pairs of transition
  matrices $P_\textnormal{p},P_\textnormal{e}$ satisfying the conditions stated in
  Theorem~\ref{thm:necessary_sufficient_finite} and therefore having
  finite meeting times.
  
%Newline necessary to ensure text stays within margin
\item[$\subscr{\PP}{all-overlap}$: Markov chains with
  all-to-all overlapping absorbing classes.]
  ~\newline Let $\subscr{\PP}{all-overlap}$ be
  the set of pairs of transition matrices $P_\textnormal{p},P_\textnormal{e}$ with the
  following property: $P_\textnormal{p}$ has multiple absorbing classes
  $A^{(\textnormal{p})}_1,A^{(\textnormal{p})}_2,\dots,A^{(\textnormal{p})}_q$ with associated periods
  $d^{(\textnormal{p})}_1,d^{(\textnormal{p})}_2,\dots,d^{(\textnormal{p})}_q$, and $P_\textnormal{e}$ has multiple
  absorbing classes $A^{(\textnormal{e})}_1,A^{(\textnormal{e})}_2,\dots,A^{(\textnormal{e})}_r$ with
  associated periods $d^{(\textnormal{e})}_1,d^{(\textnormal{e})}_2,\dots,d^{(\textnormal{e})}_r$, and for
  each $q' \in \until{q}$ and $r' \in \until{r}$, $A^{(\textnormal{p})}_{q'} \cap
  A^{(\textnormal{e})}_{r'} \neq \phi$ and $\gcd(d^{(\textnormal{p})}_{q'},d^{(\textnormal{e})}_{r'})=1$.

%Newline necessary to ensure text stays within margin
\item[$\subscr{\PP}{SA-overlap}$: single absorbing Markov chains with
  overlapping absorbing]~\newline \textbf{classes.} Let $\subscr{\PP}{SA-overlap}$ be the set of
  pairs of transition matrices $P_\textnormal{p},P_\textnormal{e}$ with the following
  property: $P_\textnormal{p}$ has a single absorbing class $A^{(\textnormal{p})}$ with
  period $d^{(\textnormal{p})}$, and $P_\textnormal{e}$ has a single absorbing class
  $A^{(\textnormal{e})}$ with period $d^{(\textnormal{e})}$, and $A^{(\textnormal{p})} \cap A^{(\textnormal{e})}\neq \phi$
  and $\text{gcd}(d^{(\textnormal{p})},d^{(\textnormal{e})})=1$.

\item[$\subscr{\PP}{one-ergodic}$: one ergodic Markov chain.]
  Let $\subscr{\PP}{one-ergodic}$ be the set of pairs of transition
  matrices $P_\textnormal{p},P_\textnormal{e}$ such that one of the matrices $P_\textnormal{p}$
  or $P_\textnormal{e}$ is ergodic.

\end{description}
Given the above definitions the following theorem holds.
\begin{theorem}[Sufficient conditions for finite meeting times]\label{thm:sufficient_finite}
  The sets of pairs of transition matrices
  $\subscr{\PP}{finite}$, $\subscr{\PP}{all-overlap}$,
  $\subscr{\PP}{SA-overlap}$, $\subscr{\PP}{one-ergodic}$ satisfy
  \begin{equation*}
    ( \subscr{\PP}{one-ergodic} \union \subscr{\PP}{SA-overlap} ) \subset
    \subscr{\PP}{all-overlap} \subset  \subscr{\PP}{finite}.
  \end{equation*}
\end{theorem}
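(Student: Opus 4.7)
The plan is to prove the two inclusions separately, with the key tool for the harder one being condition (ii) of Theorem~\ref{thm:necessary_sufficient_finite}. That is, to show a pair of matrices lies in $\subscr{\PP}{finite}$, it suffices to exhibit, for each pair of starting nodes $(i,j)$, a common node $k$ and a common length $\ell$ such that a walk of length $\ell$ exists from $i$ to $k$ under $P_\textnormal{p}$ and a walk of length $\ell$ exists from $j$ to $k$ under $P_\textnormal{e}$.

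The inclusion $\subscr{\PP}{SA-overlap} \subset \subscr{\PP}{all-overlap}$ is immediate from the definitions: it is precisely the $q = r = 1$ case of all-overlap. For $\subscr{\PP}{one-ergodic} \subset \subscr{\PP}{all-overlap}$, assume without loss of generality that $P_\textnormal{p}$ is ergodic. Then $P_\textnormal{p}$ has a unique absorbing class $A^{(\textnormal{p})}_1 = \until{n}$ with period $d^{(\textnormal{p})}_1 = 1$. Since $P_\textnormal{e}$, being a finite Markov chain, has absorbing classes $A^{(\textnormal{e})}_1,\dots,A^{(\textnormal{e})}_r$, we have $A^{(\textnormal{p})}_1 \cap A^{(\textnormal{e})}_{r'} = A^{(\textnormal{e})}_{r'} \neq \phi$ and $\gcd(1,d^{(\textnormal{e})}_{r'})=1$ for every $r'$, so all conditions defining all-overlap hold.

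The main work is $\subscr{\PP}{all-overlap} \subset \subscr{\PP}{finite}$. Fix any pair of starting nodes $(i,j)$. Because the state space is finite, the walk starting at $i$ under $P_\textnormal{p}$ eventually enters some absorbing class $A^{(\textnormal{p})}_{q'}$; hence there is a walk from $i$ to some $u \in A^{(\textnormal{p})}_{q'}$ of length $\ell_1$. Symmetrically there is a walk from $j$ to some $v \in A^{(\textnormal{e})}_{r'}$ of length $\ell_2$. By the all-overlap hypothesis, $A^{(\textnormal{p})}_{q'} \cap A^{(\textnormal{e})}_{r'}$ is nonempty, so pick $w$ in it. Invoking the standard fact that in a communicating class of period $d$ the set of walk lengths between any two fixed nodes eventually fills a full arithmetic progression with common difference $d$, and prepending the walks $i \to u$ and $j \to v$, we obtain residues $\alpha_\textnormal{p},\alpha_\textnormal{e}$ and thresholds $K_\textnormal{p},K_\textnormal{e}$ such that walks from $i$ to $w$ of length $\alpha_\textnormal{p} + k\, d^{(\textnormal{p})}_{q'}$ exist for every $k \geq K_\textnormal{p}$, and walks from $j$ to $w$ of length $\alpha_\textnormal{e} + k'\, d^{(\textnormal{e})}_{r'}$ exist for every $k' \geq K_\textnormal{e}$.

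To match lengths we solve $\alpha_\textnormal{p} + k\, d^{(\textnormal{p})}_{q'} = \alpha_\textnormal{e} + k'\, d^{(\textnormal{e})}_{r'}$. Since $\gcd(d^{(\textnormal{p})}_{q'},d^{(\textnormal{e})}_{r'})=1$, B\'ezout's identity produces an integer solution, and the general family $(k + t\, d^{(\textnormal{e})}_{r'},\, k' + t\, d^{(\textnormal{p})}_{q'})$ for integer $t$ lets us push both indices above their respective thresholds. The resulting common node $k=w$ and common length $\ell$ verify condition (ii) of Theorem~\ref{thm:necessary_sufficient_finite} at $(i,j)$, completing the proof. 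The main obstacle is precisely this number-theoretic synchronization step; the coprimality condition on the periods in the all-overlap definition is exactly what makes it go through, and the reason that pairwise overlap alone would not suffice.
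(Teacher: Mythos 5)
Your proposal is correct and follows essentially the same route as the paper: the two easy inclusions are handled identically, and the main inclusion $\subscr{\PP}{all-overlap}\subset\subscr{\PP}{finite}$ uses the same arithmetic-progression-of-walk-lengths plus coprimality synchronization argument, with your single uniform observation that every state reaches some absorbing class neatly replacing the paper's explicit three-case split (absorbing--absorbing, transient--transient, mixed) and with condition (ii) of Theorem~\ref{thm:necessary_sufficient_finite} in place of the paper's condition (iii), which are equivalent. The only omission is that the paper treats the inclusions as proper and exhibits a counter-example (a period-4/period-2 pair with finite meeting times) to show $\subscr{\PP}{all-overlap}\neq\subscr{\PP}{finite}$, a point your argument does not address.
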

% \fbtodo{ define the following sets of pairs of transition matrices:  $S_1$ is the set of decomposable matrix ....  Then, $S_1$}

\begin{figure}[h]
  \begin{center}
    \includegraphics[width=.9999\textwidth]{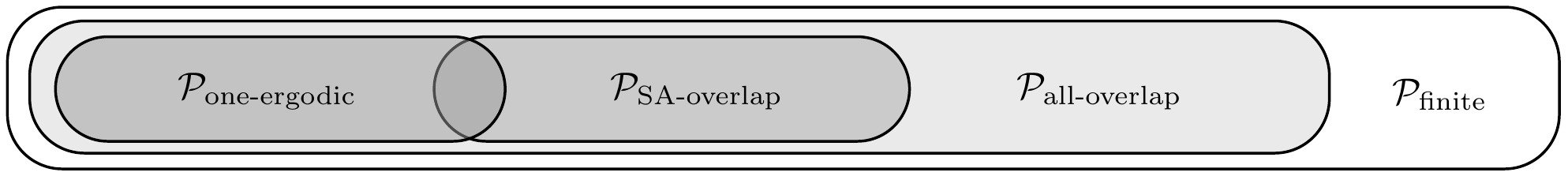}
  \end{center}
  \caption{Sets of pairs of transition matrices with finite meeting times.}
  \label{fig:diagram-sufficient-conditions}
\end{figure}

\begin{proof}
  Before we prove the statement in the theorem we prove a minor result. Consider two Markov chains, each with transition matrices $P_\textnormal{p},P_\textnormal{e} \in \mathbb{R}^{n \times n}$  defined on a digraph $\G$ with nodeset $V = \until{n}$. Let the absorbing classes of $P_\textnormal{p}$ be $A^{(\textnormal{p})}_1,A^{(\textnormal{p})}_2,\dots,A^{(\textnormal{p})}_q$ with periods $d^{(\textnormal{p})}_1,d^{(\textnormal{p})}_2,\dots,d^{(\textnormal{p})}_q$ respectively, and let the absorbing classes of $P_\textnormal{e}$ be $A^{(\textnormal{e})}_1,A^{(\textnormal{e})}_2,\dots,A^{(\textnormal{e})}_r$ with periods $d^{(\textnormal{e})}_1,d^{(\textnormal{e})}_2,\dots,d^{(\textnormal{e})}_r$ respectively. If there exists an absorbing class $A^{(\textnormal{p})}_{q'}$ in $P_\textnormal{p}$ and $A^{(\textnormal{e})}_{r'}$ in $P_\textnormal{e}$ such that $A^{(\textnormal{p})}_{q'} \cap A^{(\textnormal{e})}_{r'} \neq \phi$ and $\text{gcd}(d^{(\textnormal{p})}_{q'}, d^{(\textnormal{e})}_{r'}) = 1$, then there exists a walk from all pairs $(i,j)$, where $i$ is any node from which there exists a walk to $A^{(\textnormal{p})}_{q'}$ and $j$ is any node from which there exists a walk to $A^{(\textnormal{e})}_{r'}$, to a node $(k,k)$ in the digraph associated with the transition matrix $P_\textnormal{p} \kron P_\textnormal{e}$.

  The proof of this result is as follows. Since $A^{(\textnormal{p})}_{q'} \cap A^{(\textnormal{e})}_{r'} \neq \phi$ there exists at least one node $k$ which is accessible from both $i$ and $j$. Since $k$ belongs to the absorbing class $A^{(\textnormal{p})}_{q'}$, starting from the node $i$ there exists all walks of length $u_1 d^{(\textnormal{p})}_{q'} + v_1$ to the node $k$ for all $u_1\geq U_1$, for some $U_1 \in \mathbb{N}$ sufficiently large and some $v_1 \in \mathbb{N}$ such that $0\leq v_1 \leq d^{(\textnormal{p})}_{q'}$. Similarly, since $k$ also belongs to the absorbing class $A^{(\textnormal{e})}_{r'}$, starting from the node $j$ there exists all walks of length $u_2 d^{(\textnormal{e})}_{r'} + v_2$ to the node $k$ for all $u_2\geq U_2$, for some $U_2 \in \mathbb{N}$ sufficiently large and some $v_2\in \mathbb{N}$ such that $0\leq v_2 \leq d^{(\textnormal{e})}_{r'}$. Since $\text{gcd}(d_{q'}^{(\textnormal{p})}, d_{r'}^{(\textnormal{e})}) =1$ we can always find $u_1$ and $u_2$ such that $u_1 d^{(\textnormal{p})}_{q'} + v_1 = u_2 d^{(\textnormal{p})}_{r'} + v_2$. Thus there exists a walk of equal length to the node $k$ from both $i$ and $j$ which ensures that $(k,k)$ is accessible from $(i,j)$.

To prove $\subscr{\PP}{all-overlap} \subset \subscr{\PP}{finite}$ we utilize statement $(iii)$ in Theorem~\ref{thm:necessary_sufficient_finite} to show that for every pair of nodes $(i,j)$, where $i, j$ are nodes in the Markov chain associated with $P_\textnormal{p}, P_\textnormal{e}$, there must exist a walk to a common node of the form $(k,k)$.  Consider a pair of Markov chains $(P_\textnormal{p}, P_\textnormal{e}) \in \subscr{\PP}{all-overlap}$. The states of the Markov chain associated with the transition matrix $P_\textnormal{p}$ can be split into a set of absorbing classes $A^{(\textnormal{p})}_1,A^{(\textnormal{p})}_2,\dots,A^{(\textnormal{p})}_q$ and transient classes $T^{(\textnormal{p})}_1,T^{(\textnormal{p})}_2,\dots,T^{(\textnormal{p})}_s$. Similarly for the Markov chain associated with $P_\textnormal{e}$, the states can be split into a set of absorbing classes $A^{(\textnormal{e})}_1,A^{(\textnormal{e})}_2,\dots,A^{(\textnormal{e})}_r$ and transient classes $T^{(\textnormal{e})}_1,T^{(\textnormal{e})}_2,\dots ,T^{(\textnormal{e})}_t$. We begin by first proving the case for pairs of states belonging to (1) the absorbing classes of both chains, (2) the transient classes of both chains, and finally, (3) transient states of one chain paired with absorbing classes from the other chain.

  Now we will use this result to prove $\subscr{\PP}{all-overlap} \subset \subscr{\PP}{finite}$. We shall show that for pairs of matrices belonging to $\subscr{\PP}{all-overlap}$ the meeting times are finite by concluding that statement $(iii)$ of Theorem~\ref{thm:necessary_sufficient_finite} is satisfied. Consider a pair of Markov chains $(P_\textnormal{p}, P_\textnormal{e}) \in \subscr{\PP}{all-overlap}$. The states of the Markov chain associated with the transition matrix $P_\textnormal{p}$ can be split into a set of absorbing classes $A^{(\textnormal{p})}_1,A^{(\textnormal{p})}_2,\dots,A^{(\textnormal{p})}_q$ and transient classes $T^{(\textnormal{p})}_1,T^{(\textnormal{p})}_2,\dots,T^{(\textnormal{p})}_s$. Similarly for the Markov chain associated with $P_\textnormal{e}$, the states can be split into a set of absorbing classes $A^{(\textnormal{e})}_1,A^{(\textnormal{e})}_2,\dots,A^{(\textnormal{e})}_r$ and transient classes $T^{(\textnormal{e})}_1,T^{(\textnormal{e})}_2,\dots ,T^{(\textnormal{e})}_t$. For statement $(iii)$ in Theorem~\ref{thm:necessary_sufficient_finite} to be satisfied, for every pair of nodes $(i,j)$, where $i, j$ are nodes in the Markov chain associated with $P_\textnormal{p}, P_\textnormal{e}$, there must exist a walk to a node of the form $(k,k)$. To do so we shall initially consider pairs of states belonging to the absorbing classes of both chains, followed by the transient classes of both chains, and finally, transient states of one chain paired with absorbing classes from the other chain, and show that in each case we show a common node exists to which there is a walk of equal length using the proven result.
  
    First, consider nodes $(i,j)$ such that $i$ belongs to an absorbing class $A^{(\textnormal{p})}_{q'}$ where $q' \in \until{q}$ and $j$ belongs to an absorbing class $A^{(\textnormal{e})}_{r'}$ where $r' \in \until{r}$. By definition, every node in an absorbing class has walks to every other node in its class. $\subscr{\PP}{all-overlap}$ gives that $A^{(\textnormal{p})}_{q'} \cap A^{(\textnormal{e})}_{r'} \neq \phi$ and $\text{gcd}(d^{(\textnormal{p})}_{q'}, d^{(\textnormal{e})}_{r'})=1$. Hence the provisions for the result are satisfied for nodes $(i,j) \in A^{(\textnormal{p})}_{q'}\times A^{(\textnormal{e})}_{r'}$ as $A^{(\textnormal{p})}_{q'} \cap A^{(\textnormal{e})}_{r'} \neq \phi$ and $\text{gcd}(d^{(\textnormal{p})}_{q'}, d^{(\textnormal{e})}_{r'})=1$ for every $q' \in \until{q}$ and every $r' \in \until{r}$.
  
    Second, consider nodes $(i,j)$ such that $i$ belongs to a transient class $T^{(\textnormal{p})}_{s'}$ where $s' \in \until{s}$ and $j$ belongs to a transient class $T^{(\textnormal{e})}_{t'}$ where $t' \in \until{t}$.  Since $i$ belongs to a transient class, there must exist a walk to one of the absorbing classes, say $A^{(\textnormal{p})}_{q'}$. Similarly, since $j$ belongs to a transient class, there must exist a walk to one of the absorbing classes, say $A^{(\textnormal{e})}_{r'}$. Hence by the proven result, for each node $(i,j) \in T^{(\textnormal{p})}_{s'} \times T^{(\textnormal{e})}_{t'}$ for every $s' \in \until{s}$ and $t' \in \until{t}$ there exists walks to a node of the form $(k,k)$.

  Finally, consider nodes $(i,j)$ such that $i$ belongs to a transient class $T^{(\textnormal{p})}_{s'}$ and $j$ belongs to an absorbing class $A^{(\textnormal{e})}_{r'}$. Since $i$ belongs to a transient class, there must exist a walk starting from $i$ to an absorbing class, say $A^{(\textnormal{p})}_{q'}$. Thus once again we can apply the earlier stated result for nodes $(i,j) \in T^{(\textnormal{p})}_{s'} \times A^{(\textnormal{e})}_{r'}$ for every $s' \in \until{s}$ and $r' \in \until{r}$. Similarly, the case of nodes belong to absorbing classes in $P_\textnormal{p}$ and transient classes in $P_\textnormal{e}$ also follows. 

  Thus we have exhausted all pairs $(i,j)$ in $P_\textnormal{p} \kron P_\textnormal{e}$ and for each pair found a node of the form $(k,k)$. Therefore $P_\textnormal{p}$ and $P_\textnormal{e}$ satisfy the conditions stated in statement $(iii)$ of Theorem~\ref{thm:necessary_sufficient_finite}, hence guaranteeing finite meeting times and proving that $\subscr{\PP}{all-overlap} \subseteq S$. To show that $\subscr{\PP}{all-overlap} \neq S$, we present a counter-example in Figure~\ref{fig:InfinitemeetingTime}. This concludes the proof for $\subscr{\PP}{all-overlap}\subset \subscr{\PP}{finite}$.

  \begin{figure}[t]
    \centering
    \includegraphics[width=.5\textwidth]{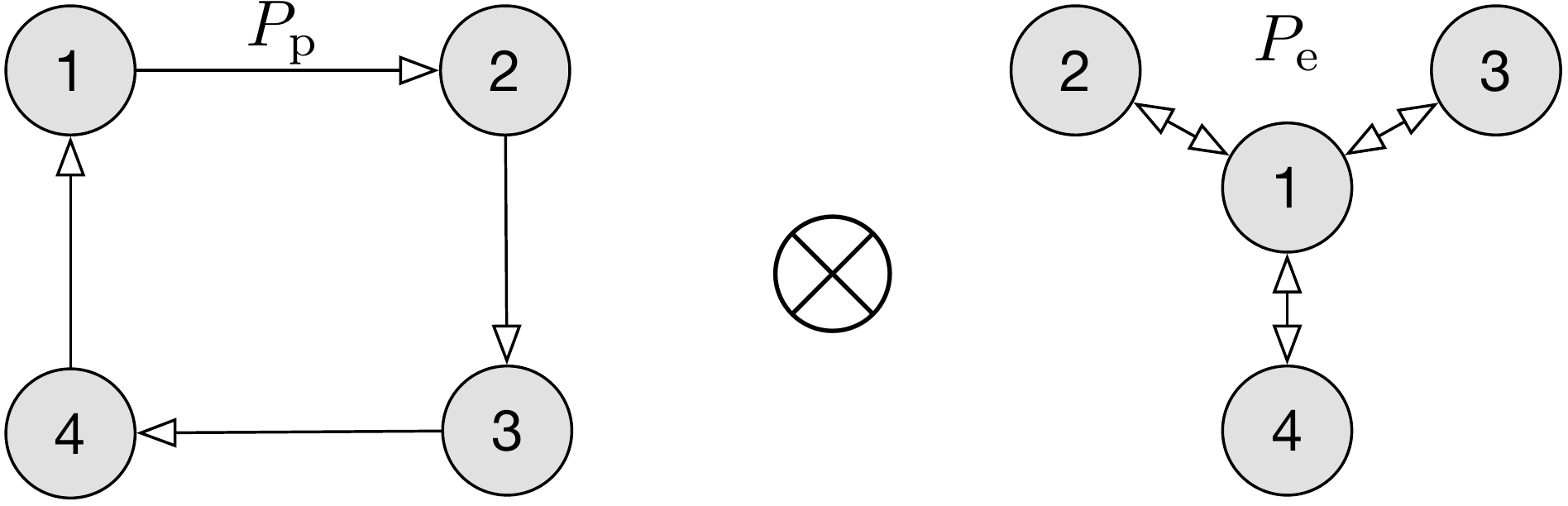}
    \label{fig:InfinitemeetingTime}
    \caption{The periods associated with the pair of Markov Chains shown here are not co-prime; $P_\textnormal{p}$ is a period 4 chain and $P_\textnormal{e}$ is a period 2 chain. However, the meeting times are finite as they satisfy conditions in Theorem~\ref{thm:necessary_sufficient_finite}.}
  \end{figure}

  Now, we prove $\subscr{\PP}{SA-overlap} \subset \subscr{\PP}{all-overlap}$. The pairs of matrices $(P_\textnormal{p},P_\textnormal{e})$ $\in$\newline $\subscr{\PP}{SA-overlap}$ is obtained by considering the subset of matrices which only have a single absorbing class. Thus $\subscr{\PP}{SA-overlap} \subset \subscr{\PP}{all-overlap}$.

  Finally, to prove $\subscr{\PP}{one-ergodic} \subset \subscr{\PP}{all-overlap}$ let us assume without loss of generality that $P_\textnormal{p}$ is irreducible and aperiodic. This would imply that the entire nodeset $V$ is an absorbing state and $d^{(\textnormal{p})}=1$. One can see that $P_\textnormal{p}$ paired with any other matrix $P_\textnormal{e}$ belongs to $\subscr{\PP}{all-overlap}$. Thus $\subscr{\PP}{one-ergodic} \subset \subscr{\PP}{all-overlap}$.
\end{proof}

\subsection{Mean meeting time and relation to hitting times}
 Before we define the mean meeting time for two random walkers, we introduce a minor result.
\begin{remark}\label{rem:pairwise-meeting-times}
  Consider two random walkers moving with transition matrices $P_\textnormal{p},P_\textnormal{e}$ starting from nodes $i,j$ respectively, then the meeting time
  \begin{equation}
    \fmt_{i,j} = (\vect{e}_1 \kron \vect{e}_2)(I_{n^2} - (P_\textnormal{p} \kron P_\textnormal{e})E)^{-1}\vectorones[n^2].
  \end{equation}
\end{remark}
Note that the expression above is a direct result of equation~\refp{eq:Vector2agent}. % has finite values for the diagonal entries $m_{ii}$ as the value represents the meeting time assuming the agents do not meet at time $t=0$. 

We are now in a position to define the \emph{mean meeting time} of two random walkers. Stationary distributions are well-defined for both $P_\textnormal{p}$ and $P_\textnormal{e}$ when each transition matrix has a single absorbing class. Further the meeting times for matrices with this property are finite only if the absorbing classes overlap and the periods are co-prime as is the case for pairs of transition matrices in $\subscr{\mathcal{P}}{SA-overlap}$. Hence we have the following result.
\begin{corollary}[Mean meeting time]
  Consider two transition matrices $P_\textnormal{p},P_\textnormal{e}$ with stationary distributions $\pi_\textnormal{p},\pi_\textnormal{e}$. The mean meeting time
  \begin{equation}\label{eq:MFMT}
    \MFMT(P_\textnormal{p},P_\textnormal{e}) = \pi_\textnormal{p}^\top \FMT \pi_\textnormal{e} =(\pi_\textnormal{p} \kron \pi_\textnormal{e})^\top \Vect{\FMT},
  \end{equation}
  where $M$ is the matrix of meeting times, is finite if the pair of transition matrices $(P_\textnormal{p},P_\textnormal{e}) \in \subscr{\mathcal{P}}{SA-overlap}$ .
\end{corollary}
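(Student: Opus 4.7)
The plan is to prove the Corollary in two natural pieces: first verify the algebraic identity $\pi_\textnormal{p}^\top \FMT \pi_\textnormal{e} = (\pi_\textnormal{p} \kron \pi_\textnormal{e})^\top \Vect{\FMT}$, and second establish that this scalar is finite. Both steps are largely bookkeeping on top of the machinery already built in Theorems~\ref{thm:necessary_sufficient_finite} and~\ref{thm:sufficient_finite}, so I expect the proof to be short.

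For the identity, the key observation is that $\pi_\textnormal{p}^\top \FMT \pi_\textnormal{e}$ is a scalar and therefore equals its own vectorization. I would then apply property $(ii)$ of Lemma~\ref{def:KronProps1} with $A = \pi_\textnormal{p}^\top$, $C = \FMT$, and $B = \pi_\textnormal{e}$, which converts $\Vect{\pi_\textnormal{p}^\top \FMT \pi_\textnormal{e}}$ into the stated Kronecker-product form acting on $\Vect{\FMT}$. This is a one-line application consistent with the vectorization conventions fixed in Section~\ref{subsec:KronProps}.

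For the finiteness claim, I would chain the two previous theorems. Since $(P_\textnormal{p},P_\textnormal{e}) \in \subscr{\PP}{SA-overlap}$, Theorem~\ref{thm:sufficient_finite} gives the inclusion $\subscr{\PP}{SA-overlap} \subset \subscr{\PP}{all-overlap} \subset \subscr{\PP}{finite}$, so Theorem~\ref{thm:necessary_sufficient_finite}$(i)$ guarantees that every entry $\fmt_{i,j}$ of $\FMT$ is finite. Independently, membership in $\subscr{\PP}{SA-overlap}$ means that each of $P_\textnormal{p}$ and $P_\textnormal{e}$ has a single absorbing class, so by the uniqueness statement recalled in Section~\ref{subsec:MarkChains} both stationary distributions $\pi_\textnormal{p}$ and $\pi_\textnormal{e}$ exist as probability vectors with entries in $[0,1]$. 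Therefore
\begin{equation*}
  \MFMT(P_\textnormal{p},P_\textnormal{e}) \;=\; \sum_{i=1}^n \sum_{j=1}^n (\pi_\textnormal{p})_i (\pi_\textnormal{e})_j \, \fmt_{i,j}
\end{equation*}
is a finite convex combination of finitely many finite numbers, hence finite.

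I do not anticipate a genuine obstacle. The only point that requires a bit of care is confirming that $\subscr{\PP}{SA-overlap}$ simultaneously supplies both ingredients the corollary needs: the single-absorbing-class hypothesis for each chain (which delivers well-defined stationary distributions) and the overlap-plus-coprime-periods hypothesis (which, via Theorem~\ref{thm:sufficient_finite}, places the pair in $\subscr{\PP}{finite}$ and thereby makes every entry of $\FMT$ finite). Once these two hooks are acknowledged, the corollary reduces to a one-line estimate, and the identity in~\eqref{eq:MFMT} is exactly the stated Kronecker rewriting.
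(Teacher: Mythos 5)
Your proposal is correct and follows essentially the same route as the paper: the paper likewise writes $\MFMT(P_\textnormal{p},P_\textnormal{e})=\sum_{i}\sum_{j}\pi_\textnormal{p}^{(i)}\pi_\textnormal{e}^{(j)}\fmt_{i,j}$ and collects the sum into the Kronecker form, the only cosmetic difference being that you invoke Lemma~\ref{def:KronProps1}(ii) directly on $\pi_\textnormal{p}^\top \FMT \pi_\textnormal{e}$ while the paper substitutes the entrywise expression from Remark~\ref{rem:pairwise-meeting-times} and re-sums. Your explicit finiteness argument via $\subscr{\PP}{SA-overlap}\subset\subscr{\PP}{finite}$ and the existence of the stationary distributions is a welcome addition, since the paper's own proof leaves that step implicit.
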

\begin{proof}
The mean meeting time can be obtained from the meeting times as
  \begin{align*}
%    \begin{split}
      \MFMT(P_\textnormal{p},P_\textnormal{e}) &= \sum_{i}\sum_{j}\pi_\textnormal{p}^{(i)}\pi_\textnormal{e}^{(j)} m_{i,j}  \\
      &= \sum_{i}\sum_{j}(\pi_\textnormal{p}^{(i)}\vect{e}_{i}\kron \pi_\textnormal{e}^{(j)}\vect{e}_{j})(I_{n^2} - (I_n \kron P)E)^{-1}\vectorones[n^2]  \\
      &= (\pi_\textnormal{p} \kron \pi_\textnormal{e})(I_{n^2} - (P_\textnormal{p} \kron P_\textnormal{e})E)^{-1}\vectorones[n^2]. 
  %  \end{split}
  \end{align*}
  \end{proof}
Further as the following result shows, the hitting times of a Markov chain are equal to the meeting times for the case of a mobile pursuer and stationary evader.

\begin{corollary}[Connection to hitting times and meeting times with stationary evader]
  Consider a stationary evader with distribution $\pi_e$ and a pursuer with an irreducible transition matrix $P_p$ and stationary distribution $\pi_p$, then the following properties hold:
  \begin{enumerate}
  \item  the meeting times between the stationary evader and the pursuer are equal to the pairwise hitting times of $P_p$ and are given by
    \begin{equation}
      h_{i,j} = m_{i,j} = (\vect{e}_1 \kron \vect{e}_2)^\top(I_{n^2} - (I_n \kron P_p)E)^{-1}\vectorones[n^2],
    \end{equation}
    where $h_{i,j}$ is the expected time to travel from node $i$ to node $j$ and
  \item  the mean meeting time between the stationary evader and the pursuer is given by
    \begin{equation}
      \MFMT_\textnormal{stationary}(\pi_e,P_p) = (\pi_e \kron \pi_p)^\top(I_{n^2} - (I_n \kron P_p)E)^{-1}\vectorones[n^2].
    \end{equation}
  \end{enumerate}
\end{corollary}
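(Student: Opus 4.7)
The plan is to interpret a \emph{stationary} evader as an evader governed by the identity transition matrix $P_\textnormal{e} = I_n$: the evader stays put forever. Once this identification is made, both parts of the corollary reduce to direct specializations of Theorem~\ref{thm:necessary_sufficient_finite}, Remark~\ref{rem:pairwise-meeting-times}, and the mean meeting time Corollary.

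First I would establish the probabilistic identification $m_{i,j}=h_{i,j}$. If $X^{(\textnormal{e})}_0 = j$ and the evader transitions according to $I_n$, then $X^{(\textnormal{e})}_t = j$ for all $t \ge 0$. Hence the event $\{X^{(\textnormal{p})}_t = X^{(\textnormal{e})}_t\}$ is exactly $\{X^{(\textnormal{p})}_t = j\}$, and $T_{i,j}$ coincides with the first hitting time of $j$ by the pursuer starting at $i$. Taking expectations gives $m_{i,j} = h_{i,j}$. Next I would verify that the linear system is well posed: since $P_\textnormal{p}$ is irreducible, the pair $(P_\textnormal{p}, I_n) \in \subscr{\PP}{one\text{-}ergodic}$, hence by Theorem~\ref{thm:sufficient_finite} it lies in $\subscr{\PP}{finite}$ and the inverse $(I_{n^2} - (I_n \kron P_\textnormal{p})E)^{-1}$ exists. (One can alternatively observe directly that $I_n$ has each singleton $\{j\}$ as an absorbing class with period $d^{(\textnormal{e})}_j=1$, and $\gcd(d^{(\textnormal{p})}, 1) = 1$ regardless of the period of $P_\textnormal{p}$, placing the pair in $\subscr{\PP}{all\text{-}overlap}$.) Substituting $P_\textnormal{e} = I_n$ into the formula of Remark~\ref{rem:pairwise-meeting-times} then produces the stated closed form for $m_{i,j}=h_{i,j}$.

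For part (ii), I would apply Corollary~1 (mean meeting time) with $P_\textnormal{e} = I_n$. The evader's placement distribution $\pi_\textnormal{e}$ is trivially stationary for $I_n$ (every probability vector is), so the formula $\MFMT(P_\textnormal{p},P_\textnormal{e}) = (\pi_\textnormal{p} \kron \pi_\textnormal{e})^\top \Vect{\FMT}$ applies and reduces, after the substitution $P_\textnormal{e} = I_n$ in the expression for $\Vect{\FMT}$, to the claimed $\MFMT_\textnormal{stationary}(\pi_\textnormal{e}, P_\textnormal{p}) = (\pi_\textnormal{e} \kron \pi_\textnormal{p})^\top (I_{n^2} - (I_n \kron P_\textnormal{p})E)^{-1}\vectorones[n^2]$, modulo the ordering convention used for the Kronecker factors in the paper (which follows from the row/column ordering in $\Vect{\cdot}$).

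There is no serious obstacle here; the proof is a chain of substitutions into already-proved formulas. The only point deserving a line of justification is why the inverse exists: it is because irreducibility of $P_\textnormal{p}$ makes the pair $(P_\textnormal{p}, I_n)$ belong to $\subscr{\PP}{finite}$, which by equivalence (iv) of Theorem~\ref{thm:necessary_sufficient_finite} is exactly the condition that $(I_n \kron P_\textnormal{p})E$ be convergent and hence that $I_{n^2} - (I_n \kron P_\textnormal{p})E$ be nonsingular.
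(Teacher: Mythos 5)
Your proposal is correct and follows essentially the same route as the paper: identify the stationary evader with $P_\textnormal{e}=I_n$, note that any probability vector is stationary for $I_n$, invoke the sufficient conditions of Theorem~\ref{thm:sufficient_finite} for finiteness, and substitute into the single-pursuer formulas. The only (harmless) difference is that you justify $m_{i,j}=h_{i,j}$ by the direct probabilistic observation that a never-moving evader turns the meeting event into a hitting event, whereas the paper cites an external reference for that identification; your parenthetical fallback to $\subscr{\PP}{all-overlap}$ is in fact the more careful finiteness argument, since $\subscr{\PP}{one-ergodic}$ as defined requires aperiodicity of $P_\textnormal{p}$, which the corollary does not assume.
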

\begin{proof}
  A stationary evader can be described by the transition matrix $I_n$. However, note that the identity matrix has non-unique stationary distribution hence the evader stationary distribution can be arbitrarily defined given that $\sum_{i=1}^n{\pi^{(i)}_{e}} = 1$. Since $P$ is irreducible, the pair of matrices $(I_n,P)$ belongs to $\mathcal{P}_\text{one-ergodic}$ and hence meeting times are finite. Further the expression for meeting times in this context is identical to that of pairwise hitting times~\cite[Theorem 2.3(i)]{RP-AC-FB:14k}. The mean first meeting time in such a case is
  \begin{align*}
%    \begin{split}
      \MFMT_\textnormal{stationary}(\pi_e,P_p) &= \sum_{i}\sum_{j}\pi_e^{(i)}\pi_p^{(j)} m_{i,j} \\
      &= \sum_{i}\sum_{j}(\pi_e^{(i)}\vect{e}_{i}\kron \pi_p^{(j)}\vect{e}_{j})(I_{n^2} - (I_n \kron P_p)E)^{-1}\vectorones[n^2] \\
      &= (\pi_e \kron \pi_p)(I_{n^2} - (I_n \kron P_p)E)^{-1}\vectorones[n^2].
%    \end{split}
  \end{align*}
\end{proof}
When the stationary distribution of the evader is equal to the stationary distribution of the pursuer the expression for the meeting time is identical to the mean first passage time of the Markov chain $P_p$~\cite[Theorem 2.3(i)]{RP-AC-FB:14k}.
\subsection{Comparison to existing bounds}
In this section we provide comparisons with existing bounds from literature, a summary of which is presented in Table~\ref{tab:meeting-time-estimates}. We present numerics for a variety of graphs and compare the exact value of the worst meeting time, denoted as $\mathcal{M}_\text{max}$, with bounds on the same quantity from refs.~\cite{CC-RE-HO-TR:13, DC-PR-PW:93} and with the worst hitting time computed using the formula in~\cite{RP-AC-FB:14k} and also a bound on the worst hitting time as described in~\cite{LL:93}.

Most of the bounds discussed here are for random walks i.e., equal probability of transition from a node to every neighbor. The bounds by Aldous~\cite{DJA:91} also hold for all reversible Markov chains. Hence in this section we consider transition matrices only corresponding to random walks. We include self-loops in all transition matrices to ensure aperiodicity. In general meeting times for transition matrices can be significantly smaller than the values discussed here. For example, using transition matrices which are permutation matrices one could obtain $O(n)$ meeting times on all graphs.

\begin{table}
  \label{tab:meeting-time-estimates}
  \centering
    \resizebox{\textwidth}{!}{
  \begin{tabular}{| c|| c | c | c || c | c|}
    \hline
     & \multicolumn{3}{c||}{Meeting time}  & \multicolumn{2}{c|}{Hitting time}\\   \hline
    Quantity & $\mathcal{M}_\text{max}$& \begin{tabular}{@{}c@{}} Bound by \\Cooper et al~\cite{CC-RE-HO-TR:13} \end{tabular}& \begin{tabular}{@{}c@{}} Bound by \\ Coppersmith et al~\cite{DC-PR-PW:93} \end{tabular}&$\mathcal{H}_\text{max}$~\cite{RP-AC-FB:14k}& \begin{tabular}{@{}c@{}}Bound by \\ Lov\'{a}sz~\cite{LL:93} \end{tabular}\\ \hline
    Complexity & ~$O(n^6)$ & $O(n^3)$ & $O(1)$ & $O(n^3)$ & $O(n^3)$ \\ \hline
    Ring & 83.7 & 2488.8 & 856.0 & 150.0 & 2451.8\\ 
    Path & 174.8 & 9249.0 & 856.0 & 551.0 & 17308.6\\ 
    Star & 8.0 & 161.6 & 856.0 & 58.0 & 304.0\\
    Lollipop &  224.0 & 1376.3 & 856.0 & 483.8 & 2107.1 \\
    Lattice & 35.9 & 805.6 & 856.0 & 83.7 & 1233.0\\
    \begin{tabular}{@{}c@{}}Random geometric \\ graph (dense)\end{tabular}  & 22.7 & 342.2 & 856.0  & 92.6 & 1098.8\\
    \begin{tabular}{@{}c@{}}Random geometric \\ graph (sparse)\end{tabular} & 77.0 & 3587.1 & 856.0 & 319.6 & 10138.9 \\
    \hline
  \end{tabular}}
  \caption{Comparison of exact value of worst meeting time with bounds from literature and worst hitting times for random walks on various graphs of size 20 nodes. Values shown for random geometric graphs are averages over 100 instances.}
\end{table}

Bounding the worst meeting time as discussed in~\cite{DJA:91} in terms of the worst pairwise hitting time gives estimates which are of the same order. The computational complexity of exactly obtaining the worst hitting time is $O(n^3)$~\cite[Theorem 2.3(i)]{RP-AC-FB:14k} as compared to $O(n^{6})$ for computing worst meeting times. Thus for small to medium graphs the worst hitting time can be a useful proxy.

The polynomial bound from Coppersmith et al~\cite[Theorem 3]{DC-PR-PW:93} is for sequential motion of the tokens i.e, one of the two tokens moves followed by the other. In order to compare this bound with the expression in equation~\refp{eq:Vector2agent} which is for simultaneous motion of the two random walkers, we divide the bound by two. This bound while easy to compute only provides a maximal estimate of the worst case meeting times.

The bound from Lov\'{a}sz~\cite[Corollary 3.3]{LL:93} is a bound for the worst hitting time. The bounds from Cooper et al~\cite[Theorem 1]{CC-RE-HO-TR:13} and Lov\'{a}sz, both of which involve the spectral gap of the transition matrix, behave similarly in most cases. In general the estimates tend to be one or two orders of magnitude off. The complexity of computing the spectral gap can be cost-effective as this operation can be performed in worst-case $O(n^3)$ and for certain types of matrices in $O(n^2)$.
%\clearpage
\section{Multiple pursuers and multiple evaders}\label{sec:multiple_pursuers_evaders}
In this section we extend the results obtained for the single evader and single pursuer case to a group of evaders and a group of pursuers. The mathematical treatment for the finiteness and the closed-form expression of the meeting time for groups follows in similar fashion to the case for the single evader and single pursuer.
\subsection{Finite meeting time between groups} Now consider $L$ pursuers and $M$ evaders. Let $X_t^{(\textnormal{p},1)},X_t^{(\textnormal{p},2)},\dots X_t^{(\textnormal{p},L)} \in\until{n}$  denote the locations of the $L$ pursuers at time $t\in\{0,1,2,\dots\}$. Let $X_t^{(\textnormal{e},1)},X_t^{(\textnormal{e},2)},\dots X_t^{(\textnormal{e},M)} \in\until{n}$  denote the locations of the $M$ evaders at time $t\in\{0,1,2,\dots\}$. For an $L$-tuple of nodes associated with the pursuers $(i_1,i_2,\dots i_L)$ and an $M$-tuple of nodes associated with the evaders $(j_1,j_2,\dots,j_M)$, \emph{the first meeting time among $L$ pursuers and $M$ evaders}, denoted by $T_{i_1i_2\dots i_L,j_1j_2\dots j_M}$, is the first time that one of the pursuers meets one of the evaders. More formally,  $T_{i_1i_2\dots i_L,j_1j_2\dots j_M}$ is
\begin{align*}
  &\min\{t\geq 1 | X_t^{(\textnormal{p},a)} = X_t^{(\textnormal{e},b)}\text{ for some } a \in \until{L} \text{ and } b \in \until{M}\\
  & \text{given that } X_0^{(\textnormal{p},l)} = i_l\enspace\!\!\forall\enspace\! l\in\until{L}\text{ and } X_0^{(\textnormal{e},m)} = j_m\enspace\!\!\forall\enspace\! m\in\until{M}\}.
\end{align*}
Let the transition matrices associated with the $L$ pursuers be $P_\textnormal{p}^{(1)},P_\textnormal{p}^{(2)},\dots,P_\textnormal{p}^{(L)}$ and the transition matrices associated with the $M$ evaders be $P_\textnormal{e}^{(1)},P_\textnormal{e}^{(2)},\dots,P_\textnormal{e}^{(M)}$. The following theorem gives necessary and sufficient conditions for the the first expected meeting time between the $L$ pursuers and $M$ evaders $\fmt_{i_1i_2\dots i_L,j_1j_2\dots j_M}=\mathbb{E}[T_{i_1i_2\dots i_L,j_1j_2\dots j_M}]$. For the sake of brevity, we shall refer to the first expected meeting time in this context as the \emph{group meeting time}.
\begin{theorem}[The group meeting time of multiple Markov chains]\label{thm:finite_multiple}
  Consider Markov chains with transition matrices $P_\textnormal{p}^{(1)},P_\textnormal{p}^{(2)}\dots,P_\textnormal{p}^{(L)},P_\textnormal{e}^{(1)},P_\textnormal{e}^{(2)},\dots,P_\textnormal{e}^{(M)}$ defined on a digraph $\G$ with nodeset $V = \until{n}$.  The following statements are equivalent:
  \begin{enumerate}
  \item for every $i_1,i_2,\dots i_L,j_1,j_2,\dots ,j_M \in\until{n}$, the group meeting time\newline $\fmt_{i_1i_2\dots i_L,j_1j_2\dots j_M}$ is finite,
    
  \item for every $i_1,i_2,\dots,i_L,j_1,j_2,\dots,j_M \in\until{n}$, there exists a node $k$ and a length $\ell$ such that a walk of length $\ell$ exists
    from one of the nodes $i_1,i_2,\dots,i_L$ to $k$ in one of the transition matrices $P_\textnormal{p}^{(1)},P_\textnormal{p}^{(2)},\dots,P_\textnormal{p}^{(L)}$ and a walk of length $\ell$ exists from one of the nodes $j_1,j_2,\dots,j_M$ to $k$ in one of the transition matrices in $P_\textnormal{e}^{(1)},P_\textnormal{e}^{(2)},\dots,P_\textnormal{e}^{(M)}$,

  \item for every $i_1,i_2,\dots,i_L,j_1,j_2,\dots,j_M \in\until{n}$, there exists a
    walk in the digraph associated with the stochastic matrix $P_\textnormal{p}^{(1)} \kron
    P_\textnormal{p}^{(2)}\dots\kron P_\textnormal{p}^{(L)}\kron P_\textnormal{e}^{(1)} \kron P_\textnormal{e}^{(2)} \dots \kron P_\textnormal{e}^{(M)}$ from a node $(i_1,i_2,\dots ,i_L, j_1,j_2,\dots ,j_M)$ to a node of the form $(i^{'}_1,i^{'}_2,\dots,k,\dots,i^{'}_L,j^{'}_1,j^{'}_2,\dots,k,\dots ,j^{'}_M)$, for some $k\in\until{n}$, and

  \item the substochastic matrix $PE$ is convergent and the vector of group meeting times is given by
    \begin{align} \label{eq:VectorLMagent}
      \Vect{\FMT} = (I_{n^{L+M}}-PE)^{-1} \vectorones[n^{L+M}], 
    \end{align} 
    where $\FMT \in \mathbb{R}^{n^L\times n^M}$, $P=P_\textnormal{p}^{(1)} \kron P_\textnormal{p}^{(2)}\dots\kron P_\textnormal{p}^{(L)}\kron P_\textnormal{e}^{(1)} \kron P_\textnormal{e}^{(2)} \dots \kron P_\textnormal{e}^{(M)}$ and $E$ is a binary diagonal matrix with entries $\vectorones[n^{L+M}]-\Vect{[\delta_{i_1i_2\dots i_L,j_1j_2\dots j_M}]}$.
  \end{enumerate}
\end{theorem}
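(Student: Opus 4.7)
The plan is to mirror the cyclic implication chain $(i) \implies (ii) \iff (iii) \implies (iv) \implies (i)$ established in Theorem~\ref{thm:necessary_sufficient_finite}, with the single pair $(P_\textnormal{p}, P_\textnormal{e})$ replaced by the $(L+M)$-fold Kronecker product $P$ of statement $(iv)$, and with the ``coincidence'' diagonal of $V \times V$ replaced by the richer subset of joint states in which some pursuer coordinate coincides with some evader coordinate, encoded by the generalized Kronecker delta $\delta_{i_1\dots i_L,j_1\dots j_M}$.

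I would begin with the scalar recursion for $T_{i_1\dots i_L,j_1\dots j_M}$: conditioning on the joint one-step transition to $(k_1,\dots,k_L,h_1,\dots,h_M)$, which has probability $\prod_{l=1}^{L} p^{(\textnormal{p},l)}_{i_l,k_l}\prod_{m=1}^{M} p^{(\textnormal{e},m)}_{j_m,h_m}$, the meeting time equals $1$ when $\delta_{k_1\dots k_L,h_1\dots h_M} = 1$ and $1 + T_{k_1\dots k_L,h_1\dots h_M}$ otherwise. Taking expectations, total probability collapses the constant-$1$ contributions, and iterating the mixed-product rule together with the vec identity from Lemma~\ref{def:KronProps1} recasts the recursion in vectorized form as
\begin{equation*}
\Vect{\FMT} = \vectorones[n^{L+M}] + PE\,\Vect{\FMT},
\end{equation*}
where $E$ zeroes out precisely the columns indexed by already-met joint states. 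Invertibility of $I_{n^{L+M}} - PE$ then delivers~\refp{eq:VectorLMagent}.

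For the equivalences, $(i) \Rightarrow (ii)$ follows by contrapositive exactly as in Theorem~\ref{thm:necessary_sufficient_finite}: absence of any common node reachable by walks of equal length from some pursuer and some evader precludes any meeting at all, forcing $T_{i_1\dots i_L,j_1\dots j_M} = \infty$ almost surely. The equivalence $(ii) \iff (iii)$ is the multi-factor version of the Kronecker-graph interpretation of~\cite[Proposition~1]{FH-CATJ:66}, observing that a joint state at which at least one pursuer coordinate coincides with at least one evader coordinate is exactly the endpoint demanded by statement $(iii)$. For $(iii) \Rightarrow (iv)$, post-multiplication of $P$ by $E$ zeros out the columns indexed by meeting states, so by $(iii)$ every joint state admits a walk under $PE$ reaching some node whose row-sum is strictly less than $1$; Lemma~\ref{lem:substochastic} then yields convergence of $PE$ and hence invertibility of $I_{n^{L+M}} - PE$. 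Finally $(iv) \Rightarrow (i)$ is immediate, since~\refp{eq:VectorLMagent} exhibits each group meeting time as a finite real number.

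The main obstacle I foresee is the combinatorial bookkeeping in the recursion step: in the two-chain case the met-state set is simply the diagonal of $V \times V$, cleanly represented by $\Vect{I_n}$, whereas here it is the much larger subset of $(L+M)$-tuples with \emph{any} pursuer-evader collision. Carefully packaging this subset through $\Vect{[\delta_{i_1\dots i_L,j_1\dots j_M}]}$ as the diagonal of $E$, and iterating the mixed-product identity $L+M-1$ times so that all Kronecker factors fuse into the single operator $P$ acting on the stacked joint state, is the only genuinely new piece of work; once the vectorized recursion is in hand, the remaining implications are parallel to the two-chain case.
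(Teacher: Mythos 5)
Your proposal is correct and follows essentially the same route as the paper's proof: the same one-step recursion vectorized via the mixed-product and vec identities into $\Vect{\FMT} = \vectorones[n^{L+M}] + PE\,\Vect{\FMT}$, and the same cyclic chain of implications, with $(ii)\Rightarrow(iii)$ resting on the existence of walks on Kronecker products (the paper's Lemma~\ref{lem:kron_walk}) and $(iii)\Rightarrow(iv)$ on Lemma~\ref{lem:substochastic}. The only cosmetic difference is that you express the one-step meeting probability directly through the generalized Kronecker delta rather than the paper's product formula, which collapses to the same expectation recursion.
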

\begin{proof}
    For the nodes $i_1,i_2, \dots , i_L, j_1, j_2, \dots ,j_M $, the group meeting time satisfies the recursive formula
  \begin{equation*}
    \label{eq:t_defMulti}
    T_{i_1 i_2 \dots i_L,j_1 j_2 \dots j_M}\!\! =\!\! 
    \begin{cases}
      \!1, &\!\!\!\!\! \text{w.p.}~\!\!\!\!\displaystyle\sum_{k=1}^n\big(1\!\!-\!\!\prod_{a=1}^L (1-p^{(\textnormal{p},a)}_{i_a,k})\big)\big(1\!\!-\!\!\prod_{b=1}^M (1-p^{(\textnormal{e},b)}_{j_b,k})\big),\\ 
      \!T_{k_1 k_2\dots k_L, h_1 h_2\dots h_M}\!\!+\!1, &\!\!\!\!\! \text{w.p.}~\!\!\!\!\displaystyle\sum_{k_a\neq h_b}\prod_{a=1}^{L} p_{i_a, k_a}^{(\textnormal{p},a)}\displaystyle\prod_{b=1}^{M} p_{j_b, h_b}^{(\textnormal{e},b)}.
    \end{cases}
  \end{equation*}
Note that the symbol $\sum_{k_a \neq h_b}$ is a summation over the indices $k_1,k_2,\dots,k_L,h_1,h_2,\dots,h_M$ such that $k_a \neq h_b$ for every $a\in \until{L}$ and $b\in \until{M}$.
The quantity $(1-\prod_{a=1}^L (1-p^{(\textnormal{p},a)}_{i_a,k}))$ indicates the probability that one of the pursuers will move to node $k$ and $(1-\prod_{b=1}^M (1-p^{(\textnormal{e},b)}_{i_b,k}))$ indicates the probability that one of the evaders  will move to node $k$. Therefore $\sum_{k=1}^n (1-\prod_{a=1}^L (1-p^{(\textnormal{p},a)}_{i_a,k}) )(1-\prod_{b=1}^M (1-p^{(\textnormal{e},b)}_{j_b,k}))$ is the probability that one of the pursuers encounters one of the evaders at a common node.

  Taking the expectation we have
  \begin{equation*}
    \begin{split}
      \mathbb{E}[T_{i_1i_2\dots i_L,j_1j_2\dots j_M}] &=\displaystyle\sum_{k=1}^n (1-\displaystyle\prod_{a=1}^L (1-p^{(\textnormal{p},a)}_{i_a,k}))(1-\displaystyle\prod_{b=1}^M (1-p^{(\textnormal{e},b)}_{j_b,k})) \\ &+  \displaystyle\sum_{k_a\neq h_b}\displaystyle\prod_{a=1}^{L}p_{i_a, k_a}^{(\textnormal{p},a)}\displaystyle\prod_{b=1}^{M}p_{j_b, h_b}^{(\textnormal{e},b)}(\mathbb{E}[T_{k_1k_2\dots k_L , h_1h_2\dots h_M}]+1),\\
      \implies \mathbb{E}[T_{i_1i_2,\dots i_L,j_1j_2 \dots j_M}] & = 1 + \displaystyle\sum_{k_a \neq h_b}\prod_{a=1}^{L}p_{i_a,k_a}^{(\textnormal{p},a)}\prod_{b=1}^{M}p_{j_b, h_b}^{(\textnormal{e},b)}(\mathbb{E}[T_{k_1k_2\dots k_L,h_1h_2\dots h_M}]).
    \end{split}
  \end{equation*}
  Let $\fmt_{i_1i_2\dots i_L, j_1j_2\dots j_M} = \mathbb{E}[T_{i_1i_2\dots i_L, j_1j_2\dots j_M}]$ for every $i_1, i_2, \dots , i_L, j_1, j_2, \dots, j_M \in \until{n}$ and let $\FMT=[\fmt_{i_1i_2\dots i_l,j_1 j_2 \dots j_M}]$. Note that the entries of $\FMT$ can be written as
  \begin{equation*}
    \begin{split}
      \fmt_{i_1i_2\dots i_l,j_1 j_2 \dots j_M} &= 1 + \displaystyle\sum_{k_a\neq h_b}\prod_{a=1}^L p_{j_a, k_a}^{(\textnormal{p},a)} \prod_{b=1}^M p_{j_b, h_b}^{(\textnormal{e},b)} \fmt_{k_1k_2\dots k_l,h_1 h_2 \dots h_M}\\
      &= 1 + \displaystyle\sum^n_{\substack{k_1,k_2,\dots,k_L \\ h_1,h_2,\dots ,h_M}}\displaystyle\prod_{a=1}^L p_{i_a, k_a}^{(\textnormal{p},a)}\prod_{b=1}^M p_{j_b, h_b}^{(\textnormal{e},b)} \fmt_{k_1k_2\dots k_l,h_1 h_2 \dots h_M} -\\
      &\enspace \qquad \displaystyle\sum_{\substack{k_1,k_2,\dots,k_L \\ h_1,h_2,\dots ,h_M}}\delta_{i_1i_2\dots i_L,j_1j_2 \dots j_M}\displaystyle \prod_{a=1}^L  p_{i_a, k_a}^{(\textnormal{p},a)} \prod_{b=1}^M p_{j_b, h_b}^{(\textnormal{e},b)} \fmt_{k_1k_2\dots k_l,h_1 h_2 \dots h_M},
    \end{split}
  \end{equation*}
  where we have rewritten the summation $\sum_{k_a\neq h_b}$ in terms of the generalized kronecker delta function.
  This equation can be written in vector form as
  \begin{equation*}
    \begin{split}
      \Vect{\FMT} = &\vectorones[n^{L+M}]  + (P_\textnormal{p}^{(1)} \kron P_\textnormal{p}^{(2)} \kron \cdots \kron P_\textnormal{p}^{(L)} \kron P_\textnormal{e}^{(1)} \kron P_\textnormal{e}^{(2)} \kron \cdots \kron P_\textnormal{e}^{(M)})\vectorMat[\FMT] \\ &- (P_\textnormal{p}^{(1)} \kron P_\textnormal{p}^{(2)} \kron \cdots \kron P_\textnormal{p}^{(L)} \kron P_\textnormal{e}^{(1)} \kron P_\textnormal{e}^{(2)} \kron \cdots \kron P_\textnormal{e}^{(M)})\\
      &\Vect{[\delta_{i_1i_2\dots i_L,j_1j_2 \dots j_M}]}\vectorMat[\FMT],
    \end{split}
  \end{equation*}
  \begin{equation*}
    \begin{split}
      \implies \Vect{\FMT} = &\vectorones[n^{L+M}]  + P(I_{n^{L+M}}-\Vect{[\delta_{i_1i_2\dots i_L,j_1j_2 \dots j_M}]})\vectorMat[\FMT],\\
      \implies \Vect{\FMT} = &\vectorones[n^{L+M}]  + PE\Vect{\FMT}.\\
    \end{split}
  \end{equation*}
  If the matrix $I_{n^{L+M}}-PE$ is invertible then we have a unique solution to the meeting times. We shall now show that the finiteness of group meeting times as in $(i)$ is equivalent to the existence of walks of equal length to common nodes as mentioned in $(ii)$ and in $(iii)$, which leads to invertibility of $I_{n^{L+M}}-PE$ in $(iv)$.
  
  We start by proving $(i)\implies(ii)$. If we assume that $(i) \centernot \implies (ii)$ then there exists an $L$-tuple of nodes $i_1,i_2,\dots,i_L$ and an $M$-tuple of nodes $j_1,j_2,\dots,j_M$ such that the group meeting time between groups of agents starting from these positions is finite and there exists no walk of equal length to a common node for any possible pursuer-evader pairs. However if there exists no walk of equal length from one of the nodes in $i_1,i_2,\dots,i_L$ and one of the nodes in $j_1,j_2,\dots,j_M$ to a common node, then none of the agents ever meet and the group meeting time is infinite. Hence by contradiction $(i)\implies(ii)$.

Next we show that $(ii)\implies(iii)$. The Kronecker product of the $L$ pursuer transition matrices and the $M$ evader transition matrices gives a joint transition matrix for the agents. The node $(i_1,i_2,\dots,i_L,j_1,j_2,\dots,j_M)$ corresponds to the states $X_\textnormal{p}^{(1)}=i_1,X_\textnormal{p}^{(2)}=i_2,\dots , X_\textnormal{p}^{(L)}=i_L$ and $X_\textnormal{e}^{(1)}=j_1,X_\textnormal{e}^{(2)}=j_2,\dots,X_\textnormal{e}^{(M)}=j_M$. Statement $(ii)$ ensures that there exists a node $k$ to which there is a walk of length $\ell$ from one of the nodes $i_1,i_2,\dots,i_L$ in a pursuer transition matrix $P_\textnormal{p}^{L'}$ and from one of the nodes $j_1,j_2,\dots,j_M$ in an evader transition matrix $P_\textnormal{e}^{M'}$. Starting from any node $i_l$ for any $l \in \until{L}$ there exists a node $i^{'}_l$ such that there exists a walk of length $\ell$ from $i_l$ to $i^{'}_l$ in the transition matrix $P^{(\textnormal{p})}_l$. Similarly starting from $j_m$ there exists some node $j^{'}_m$ to which there exists a walk of length $\ell$ for some $m \in \until{M}$ in the transition matrix $P^{(\textnormal{e})}_m$. Thus there exists walks of length $\ell$ : $i_1 \rightarrow i^{'}_1$, $i_2 \rightarrow i^{'}_2$, $\dots$ , $i_{L'} \rightarrow k$, $\dots$, $i_L \rightarrow i^{'}_{L}$, $j_1 \rightarrow j^{'}_1$, $j_2 \rightarrow j^{'}_2$, $\dots$ , $j_{M'} \rightarrow k$, $\dots$, and $j_M \rightarrow j^{'}_M$. Using Lemma~\ref{lem:kron_walk} (see Section~\ref{sec:appendix}) there exists a walk from $(i_1,i_2,\dots,i_{L'},\dots ,i_L,j_1,j_2,\dots ,j_{M'},\dots ,j_M)$ to a node of the form $(i^{'}_1,i^{'}_2,\dots ,k,\dots ,i^{'}_L,j^{'}_1,j^{'}_2,\dots ,k,\dots ,j^{'}_M)$, thus proving $(ii) \implies (iii)$.

Next, we show $(iii) \implies (iv)$. Note that  post-multiplying the Kronecker product of all transition matrices $P_\textnormal{p}^{(1)} \kron P_\textnormal{p}^{(2)} \kron \dots \kron P_\textnormal{p}^{(L)} \kron P_\textnormal{e}^{(1)} \kron P_\textnormal{e}^{(2)} \kron \dots \kron P_\textnormal{e}^{(M)}$ by $E$ sets columns associated with nodes of the form $(i^{'}_1,i^{'}_2,\dots ,k, \dots, i^{'}_L, j^{'}_1,j^{'}_2, \dots, k, \dots, j^{'}_M)$ to $\vectorzeros[n^{L+M}]$. Therefore every node has a walk to a node whose row-sum is less than 1 which implies that the matrix $PE$ is convergent. From this we obtain equation~\refp{eq:VectorLMagent}. Since $(iii)$ guarantees the existence of $(I_{n^{L+M}} -PE)^{-1}$, we prove that $(iii) \implies (iv)$.

  Note that the existence of $\Vect{M}$ in $(iv)$ gives $(iv) \implies (i)$. Thus we have shown that $(i)\implies (ii) \implies (iii) \implies (iv) \implies (i)$. Hence the four conditions are equivalent.
\end{proof}
The above necessary and sufficient conditions give the most general set of tuples of matrices for which finite meeting times exist. Similar to the single pursuer and single evader case, we present sufficient conditions on the transition matrices which ensure that the meeting times between two groups is finite.
\subsection{Sufficient conditions for finiteness and mean group meeting time}
Consider the following sets of $L+M$-tuples of matrices:
\begin{description}

\item[$\subscr{\PP}{finite}^\text{L,M}$: finite group meeting times.]
  Let $\subscr{\PP}{finite}^\text{L,M}$ be the set of $L+M$-tuples of
transition matrices $P_\textnormal{p}^{(1)}$,
$P_\textnormal{p}^{(2)}$, $\dots$, $P_\textnormal{p}^{(L)}$,
$P_\textnormal{e}^{(1)}$, $P_\textnormal{e}^{(2)}$, $\dots$,
$P_\textnormal{e}^{(M)}$ satisfying the conditions stated in
Theorem~\ref{thm:finite_multiple} and therefore having finite group
meeting times.
\item[$\subscr{\PP}{all-overlap}^\text{L,M}$: Markov chains with
  all-to-all overlapping absorbing classes.]~\newline Let $\subscr{\PP}{all-overlap}^\text{L,M}$ be
  the set of $L+M$-tuples of transition matrices $P_\textnormal{p}^{(1)}$,
$P_\textnormal{p}^{(2)}$, $\dots$, $P_\textnormal{p}^{(L)}$,
$P_\textnormal{e}^{(1)}$, $P_\textnormal{e}^{(2)}$, $\dots$,
$P_\textnormal{e}^{(M)}$ with the following property: for each transition matrix
  $P_\textnormal{p}^{(l)}, l \in \until{L}$ there exists a transition matrix $P_\textnormal{e}^{(m)}$ for some $m \in \until{M}$
  such that $(P_\textnormal{e}^{(l)},P_\textnormal{p}^{(m)}) \in \subscr{\PP}{all-overlap}$.

\item[$\subscr{\PP}{SA-overlap}^\text{L,M}$: single absorbing Markov chains with
  overlapping absorbing]~\newline\textbf{classes.} Let $\subscr{\PP}{SA-overlap}^\text{L,M}$ be
  the set of $L+M$-tuples of transition matrices $P_\textnormal{p}^{(1)}$,
$P_\textnormal{p}^{(2)}$, $\dots$, $P_\textnormal{p}^{(L)}$,
$P_\textnormal{e}^{(1)}$, $P_\textnormal{e}^{(2)}$, $\dots$,
$P_\textnormal{e}^{(M)}$ with the following property: for each transition matrix
  $P_\textnormal{p}^{(l)}, l \in \until{L}$ there exists a transition matrix $P_\textnormal{e}^{(m)}$ for some $m \in \until{M}$
  such that $(P_\textnormal{e}^{(l)},P_\textnormal{p}^{(m)}) \in \subscr{\PP}{SA-overlap}$.

\item[$\subscr{\PP}{one-ergodic}^\text{L,M}$: one ergodic Markov chain.]
  Let $\subscr{\PP}{one-ergodic}^\text{L,M}$ be
  the set of $L+M$-tuples of transition matrices $\multPursuers,\multEvaders$ such that one of the transition matrices is ergodic.
\end{description}
Given the above, we are now in a position to define the \emph{mean group meeting time} of two  sets of random walkers, $L$ pursuers and $M$ evaders. The group meeting times for matrices with single absorbing classes are finite when the $L+M$-tuple $(P_\textnormal{p}^{(1)},P_\textnormal{p}^{(2)},\dots,P_\textnormal{p}^{(L)},P_\textnormal{e}^{(1)},P_\textnormal{e}^{(2)},\dots,P_\textnormal{e}^{(M)}) \in \subscr{\mathcal{P}}{SA-overlap}^\text{L,M}$. Hence, we have the following result.
\begin{corollary}[Mean group meeting time]
  Consider $L+M$ transition matrices $P_\textnormal{p}^{(1)}$, $P_\textnormal{p}^{(2)}$, $\dots$, $P_\textnormal{p}^{(L)}$, $P_\textnormal{e}^{(1)}$, $P_\textnormal{e}^{(2)}$, $\dots$, $P_\textnormal{e}^{(M)}$ with stationary distributions $\pi_\textnormal{p}^{(1)}, \pi_\textnormal{p}^{(2)},\dots,$ $\pi_\textnormal{p}^{(L)},\pi_\textnormal{e}^{(1)},\pi_\textnormal{e}^{(2)},\dots,\pi_\textnormal{e}^{(M)}$. The mean group meeting time
  \begin{equation}
    \MFMT_{L,M} = (\pi_\textnormal{p}^{(1)} \kron \pi_\textnormal{p}^{(2)}\kron \dots \kron \pi_\textnormal{p}^{(L)} \kron \pi_\textnormal{e}^{(1)} \kron \pi_\textnormal{e}^{(2)} \kron \dots \kron \pi_\textnormal{e}^{(M)})^\top \Vect{M},
  \end{equation}
  where $M$ is the matrix of group meeting times, is finite if the $L+M$-tuple of transition matrices $(P_\textnormal{p}^{(1)}$, $P_\textnormal{p}^{(2)}$, $\dots$, $P_\textnormal{p}^{(L)}$, $P_\textnormal{e}^{(1)}$, $P_\textnormal{e}^{(2)}$, $\dots$, $P_\textnormal{e}^{(M)})$ $\in \subscr{\mathcal{P}}{SA-overlap}^\text{L,M}$ .
\end{corollary}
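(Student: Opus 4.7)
The plan is to mirror the structure of the single-pursuer/single-evader \emph{Mean meeting time} corollary, breaking the proof into two pieces: first establishing finiteness of the group meeting times, and then deriving the closed-form by averaging against the product of stationary distributions.

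First I would argue finiteness by showing the inclusion $\subscr{\mathcal{P}}{SA-overlap}^\text{L,M}\subset \subscr{\mathcal{P}}{finite}^\text{L,M}$, so that Theorem~\ref{thm:finite_multiple} applies. Fix any starting tuple $(i_1,\dots,i_L,j_1,\dots,j_M)$. By the definition of $\subscr{\mathcal{P}}{SA-overlap}^\text{L,M}$, for the pursuer index $l=1$ there is an evader index $m$ with $(P_\textnormal{p}^{(l)},P_\textnormal{e}^{(m)})\in\subscr{\mathcal{P}}{SA-overlap}$. The single-pair argument inside the proof of Theorem~\ref{thm:sufficient_finite} (absorbing classes overlap and periods coprime, so one can match any two residues modulo the periods) then produces a common node $k\in\until{n}$ and a length $\ell$ such that walks of length $\ell$ exist from $i_l$ to $k$ in $P_\textnormal{p}^{(l)}$ and from $j_m$ to $k$ in $P_\textnormal{e}^{(m)}$. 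Because every transition matrix in the tuple is single-absorbing, from every other pursuer/evader start node some node is reachable in any sufficiently large length, so by padding with additional walks inside the absorbing classes (again exploiting coprimality/single absorbing class) we can realize walks of a common length $\ell'$ from each $i_a$ and each $j_b$ to \emph{some} node. Applying Lemma~\ref{lem:kron_walk} lifts these walks to a walk in the digraph of $P_\textnormal{p}^{(1)}\kron\cdots\kron P_\textnormal{e}^{(M)}$ from $(i_1,\dots,i_L,j_1,\dots,j_M)$ to a node whose $l$-th pursuer coordinate and $m$-th evader coordinate both equal $k$. This is precisely statement $(iii)$ of Theorem~\ref{thm:finite_multiple}, hence the group meeting times are finite and equation~\refp{eq:VectorLMagent} applies.

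Next I would derive the formula. Because each $P_\textnormal{p}^{(l)}$ and $P_\textnormal{e}^{(m)}$ is single-absorbing, the stationary distributions $\pi_\textnormal{p}^{(l)}, \pi_\textnormal{e}^{(m)}$ are uniquely defined. By definition,
\begin{equation*}
\MFMT_{L,M} = \sum_{i_1,\dots,i_L,j_1,\dots,j_M} \Big(\prod_{l=1}^L \pi_\textnormal{p}^{(l),(i_l)} \prod_{m=1}^M \pi_\textnormal{e}^{(m),(j_m)}\Big)\, \fmt_{i_1\dots i_L, j_1\dots j_M}.
\end{equation*}
Writing $\fmt_{i_1\dots i_L, j_1\dots j_M} = (\vect{e}_{i_1}\kron\cdots\kron\vect{e}_{i_L}\kron\vect{e}_{j_1}\kron\cdots\kron\vect{e}_{j_M})^\top\Vect{\FMT}$, pulling the summation inside, and using bilinearity of the Kronecker product together with $\sum_{i}\pi^{(i)}\vect{e}_i = \pi$, the weighted sum collapses to
\begin{equation*}
\MFMT_{L,M} = (\pi_\textnormal{p}^{(1)}\kron\cdots\kron\pi_\textnormal{p}^{(L)}\kron\pi_\textnormal{e}^{(1)}\kron\cdots\kron\pi_\textnormal{e}^{(M)})^\top\Vect{\FMT},
\end{equation*}
which is the claimed formula.

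The main obstacle is the finiteness portion: unlike the two-chain case, one must match walk lengths across $L+M$ independent chains simultaneously. The key observation that makes it work is that once a single pursuer/evader pair has been coupled at a common node via a walk of length $\ell$, the remaining $L+M-2$ chains can be forced to realize the same walk length by extending with loops inside their absorbing classes (periods dividing the appropriate increment), which is possible precisely because each chain is single-absorbing and the $\gcd$ condition holds for the designated pair. The rest is a straightforward vectorization identity.
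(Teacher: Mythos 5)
Your proof is correct and follows essentially the route the paper intends: the corollary is stated without proof, resting on the asserted finiteness for tuples in $\subscr{\mathcal{P}}{SA-overlap}^\text{L,M}$ together with the same stationary-distribution averaging and vectorization identity used in the single-pursuer mean-meeting-time corollary. Your filled-in finiteness argument --- coupling one pursuer--evader pair via the overlapping-absorbing-class/coprime-period result from the proof of Theorem~\ref{thm:sufficient_finite}, noting that the remaining row-stochastic chains admit walks of any prescribed length, and lifting to the Kronecker digraph via Lemma~\ref{lem:kron_walk} to verify statement (iii) of Theorem~\ref{thm:finite_multiple} --- is exactly the natural extension of the paper's argument.
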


A word on the computational complexity for the multiple pursuer-evader case: since the general expression for the group meeting time among groups of pursuers and evaders involve extensive use of the Kronecker product, the memory and computational resources necessary are significantly affected by the curse of dimensionality. The matrix $\pursuersKronEvaders$ contains $n^{(L+M)}$ elements. Inversion of a full matrix would require $O(k^3)$ operations lending an undesirable complexity of $O(n^{3(L+M)})$~\cite{PB-MC-AS:97}. Most practical solutions to the transition matrices benefit from the sparse nature of the graphs. A sparse system of equations can be solved with complexity $O(nnz)$ where $nnz$ is the number of non-zero elements. For the Kronecker product of $L+M$ transition matrices defined on the same graph the number of non-zero elements is $|E|^{(L+M)}$, and with a sparse solver the group meeting time can be computed in $O(|E|^{(L+M)})$ operations. For further details see~\cite{JRG-CM-RS:92}.

\section{Meeting times for continous-time Markov chains}\label{sec:CTMC_meeting}
In this section we formulate the meeting time between two continuous-time Markov chains. The essence of the proof is to use the fact that the meeting time of two continuous-time Markov chains is equivalent to the hitting time on the joint transition matrix generated by the Kronecker product of the digraphs associated with the two chains.

\subsection{The meeting time of two continuous-time Markov chains}
Consider the pursuer and evader performing random walks on a set of nodes $\V :=\until{n}$  with digraphs $\G_\textnormal{p} =(V,E_\textnormal{p},Q_\textnormal{p})$, $\G_\textnormal{e}=(V,E_\textnormal{e},Q_\textnormal{e})$, edge sets $E_\textnormal{p},E_\textnormal{e}\subset \V \times \V$, and transition rate matrices $Q_\textnormal{p}$, $Q_\textnormal{e}$. Let $P_\textnormal{p}(t),P_\textnormal{e}(t)$ denote the transition matrices of the purser and evader at time $t$. Let $\fmt_{i,j}$ denote the expected first meeting time for a pursuer starting from node $i$ and an evader starting from node $j$, which shall be referred to simply as the meeting time. Then the following theorem holds.
\begin{theorem}[The meeting time of two continuous-time Markov chains]\label{thm:CTMC_single}
  Consider two Markov chains with transition rate  matrices $Q_\textnormal{p}$ and
  $Q_\textnormal{e}$ defined on a digraph $\G$ with nodeset $V = \until{n}$.  The
  following statements are equivalent:
 \begin{enumerate}
  \item for each pair of nodes $i,j$, the expected first meeting time
    $\fmt_{i,j}$ from nodes $i$ and $j$ is finite,
    
  \item for each pair of nodes $i,j$, there exists a node $k$
    such that a walk exists
    from $i$ to $k$ and a walk exists from
    $j$ to $k$,

  \item for each pair of nodes $i,j$, there exists a
    walk in the digraph associated with the transition rate matrix $Q_\textnormal{p} \kron
    I_n + I_n \kron Q_\textnormal{e}$ from $(i,j)$ to a node $(k,k)$, for some
    $k\in\until{n}$, and

  \item the matrix $E(I_{n^2}-(Q_\textnormal{p}\kron I_n + I_n \kron Q_\textnormal{e})) - I_{n^2}$ is invertible and the first meeting times are given by the unique solution to
    \begin{equation}\label{eq:CTMC-fmt}
      \Vect{\FMT} =(E(I_{n^2}-(Q_\textnormal{p}\kron I_n + I_n \kron Q_\textnormal{e})) - I_{n^2})^{-1} E\vectorones[n^2]
    \end{equation}
    where $\FMT\in \mathbb{R}^{n \times n}$ and $E \in \mathbb{R}^{n^2 \times n^2}$ is a binary diagonal matrix with diagonal entries $\vectorones[n^2] -\Vect{I_n}$.
  \end{enumerate}
\end{theorem}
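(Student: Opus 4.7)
The plan is to reduce the two-chain meeting problem to a single hitting-time computation on the joint CTMC. Because $X_t^{(\textnormal{p})}$ and $X_t^{(\textnormal{e})}$ are independent CTMCs with generators $Q_\textnormal{p}$ and $Q_\textnormal{e}$, the pair $(X_t^{(\textnormal{p})},X_t^{(\textnormal{e})})$ is itself a CTMC on $\until{n}\times\until{n}$ whose transition rate matrix is the Kronecker sum $\widetilde Q := Q_\textnormal{p}\kron I_n + I_n\kron Q_\textnormal{e}$, and the first meeting time $T_{i,j}$ is the hitting time of the diagonal set $D:=\{(k,k):k\in\until{n}\}$ by this joint chain started at $(i,j)$. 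With this reformulation, the four statements of the theorem collapse to the classical fact that the expected hitting time of a target set in a finite CTMC is finite exactly when the target is reachable from every starting state, and in that case is obtained by inverting a principal sub-generator.

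The first step is the combinatorial equivalence $(ii)\!\iff\!(iii)$. By construction, the only positive off-diagonal entries of $\widetilde Q$ connect pairs differing in exactly one coordinate, so a walk in the digraph of $\widetilde Q$ from $(i,j)$ to $(k,k)$ is precisely an interleaving of a walk $i\to k$ in $\G_\textnormal{p}$ with a walk $j\to k$ in $\G_\textnormal{e}$, and conversely any such pair of walks can be concatenated into a walk in the joint digraph. Unlike the discrete-time case of Theorem~\ref{thm:necessary_sufficient_finite}, no length matching is required, because the jump clocks of the two chains are independent continuous random variables. The equivalence $(i)\!\iff\!(iii)$ is then standard finite-state CTMC theory: if $D$ is unreachable from $(i,j)$ in the joint digraph, then the joint chain almost surely never hits $D$ and $\fmt_{i,j}=\infty$; conversely, if $D$ is reachable from every state of $T:=\until{n}^2\setminus D$, then treating $D$ as absorbing turns every $T$-state into a strictly transient state of a finite CTMC, so the expected absorption time is finite from every start.

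For the formula in $(iv)$, I would apply first-step analysis for CTMC hitting times: $h:=\Vect{\FMT}$ satisfies $(\widetilde Q h)_{(i,j)}=-1$ for $(i,j)\notin D$ together with $h_{(k,k)}=0$ for all $k\in\until{n}$. Assembling these two sets of conditions using the indicator diagonal $E$ yields $E\widetilde Q\, h = -E\vectorones[n^2]$ and $(I_{n^2}-E)h=0$; adding them and factoring produces $\bigl(E(I_{n^2}-\widetilde Q)-I_{n^2}\bigr)h = E\vectorones[n^2]$, which is exactly~\eqref{eq:CTMC-fmt}.

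The main obstacle is showing that $E(I_{n^2}-\widetilde Q)-I_{n^2}$ is genuinely invertible under $(iii)$; this is what closes the loop $(iv)\!\implies\!(i)$. Partitioning indices into $D$ and $T$, the $D$-rows of this matrix force $h_D=0$ and the $T$-rows reduce to $\widetilde Q_{TT}\, h_T = -\vectorones$, so invertibility of the full matrix is equivalent to invertibility of the principal sub-generator $\widetilde Q_{TT}$. Because $(iii)$ guarantees that every $T$-state reaches $D$, all $T$-states are strictly transient in the chain with $D$ absorbing, so the sub-stochastic semigroup $e^{t\widetilde Q_{TT}}$ has spectral radius strictly less than one for large $t$; hence $\widetilde Q_{TT}$ has eigenvalues with strictly negative real part and is invertible. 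This closes the cycle $(i)\!\implies\!(ii)\!\iff\!(iii)\!\implies\!(iv)\!\implies\!(i)$ and completes the proof.
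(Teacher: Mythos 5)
Your proposal is correct and follows the same overall route as the paper: form the joint chain with Kronecker-sum generator $Q_\textnormal{p}\kron I_n + I_n\kron Q_\textnormal{e}$, identify the meeting time as the hitting time of the diagonal set, write the first-step (Norris-style) system, and assemble it with the indicator matrix $E$ into exactly equation~\eqref{eq:CTMC-fmt}; your walk-interleaving argument for $(ii)\iff(iii)$ is also the paper's (move one coordinate at a time, $(i,j)\to(i,k)\to(k,k)$, no length matching needed). Where you genuinely diverge is the invertibility step $(iii)\implies(iv)$. The paper argues directly on the full $n^2\times n^2$ matrix: it counts sinks to show $\rank\bigl(E(Q_\textnormal{p}\kron I_n + I_n\kron Q_\textnormal{e})\bigr)=n^2-n$, identifies the kernel with the diagonal coordinate vectors, and then shifts eigenvalues by adding $I_{n^2}-E$, invoking Gershgorin to place the remaining spectrum away from zero. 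You instead order the states as $D$ (diagonal) followed by $T$ (off-diagonal), observe that $E(I_{n^2}-\widetilde Q)-I_{n^2}$ is block lower triangular with blocks $-I$ and $-\widetilde Q_{TT}$, and reduce everything to invertibility of the principal sub-generator $\widetilde Q_{TT}$, which follows from transience of $T$ under absorption at $D$ (the taboo semigroup $e^{t\widetilde Q_{TT}}$ is strictly sub-stochastic in the limit, so the spectrum of $\widetilde Q_{TT}$ lies in the open left half-plane). Your version is cleaner and more standard: it makes the equivalence ``invertible $\iff$ $\widetilde Q_{TT}$ Hurwitz'' explicit, and it sidesteps the paper's somewhat loose claim that $E(Q_\textnormal{p}\kron I_n + I_n\kron Q_\textnormal{e})$ is ``positive semi-definite from Gershgorin's disk theorem'' (the matrix is neither symmetric nor does Gershgorin by itself exclude a zero eigenvalue there; the exclusion really comes from the reachability/rank argument). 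The paper's approach, on the other hand, yields the explicit kernel basis $\vect{e}_1,\vect{e}_{n+1},\dots,\vect{e}_{n^2}$, which is mildly informative but not needed for the theorem. Both arguments are sound; no gap in yours.
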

\begin{proof}
 Consider the joint evolution of the two continuous-time Markov chains on the Kronecker product graph given by the Kronecker product $P_\textnormal{p}(t) \kron P_\textnormal{e}(t)$. The transition rate matrix for this Markov chain is easy to derive. Consider
  \begin{align*}
    \frac{d}{dt} (P_\textnormal{p}(t) \kron P_\textnormal{e}(t)) = &P_\textnormal{p}(t)Q_\textnormal{p}\kron P_\textnormal{e}(t) + P_\textnormal{p}(t)\kron P_\textnormal{e}(t)Q_\textnormal{e}\\
    =&(P_\textnormal{p}(t) \kron P_\textnormal{e}(t)) (Q_\textnormal{p} \kron I_n + I_n \kron Q_\textnormal{e}),
  \end{align*}
  where we have used the product rule for derivatives and Lemma~\ref{def:KronProps1} to obtain the joint transition rate matrix as $Q_\textnormal{p} \kron I_n + I_n \kron Q_\textnormal{e}$.

  The $n\times n$ block entries of the joint transition rate matrix
  \begin{align*}
    Q_\textnormal{p} \kron I_n + I_n \kron Q_\textnormal{e} = 
    \left[
    \begin{array}{cccc}
      q^{(\textnormal{p})}_{1,1} I_n+Q_\textnormal{e} & q^{(\textnormal{p})}_{1,2} I_n & \dots & q^{(\textnormal{p})}_{1,n} I_n \\
      q^{(\textnormal{p})}_{2,1} I_n & q^{(\textnormal{p})}_{2,2} I_n + Q_\textnormal{e}& \dots  &q^{(\textnormal{p})}_{2,n} I_n \\
      \vdots & \vdots & \ddots & \vdots\\
      q^{(\textnormal{p})}_{n,1} I_n & q^{(\textnormal{p})}_{n,2}I_n &  \dots& q^{(\textnormal{p})}_{n,n}I_n +Q_\textnormal{e}
    \end{array} \right].
  \end{align*}

  The meeting times for the two transition rate matrices correspond to hitting times from nodes on the joint transition rate matrix $Q_\textnormal{p} \kron I_n + I_n \kron Q_\textnormal{e}$ to the set of common nodes of the form $(k,k)$. The solution to hitting times for continuous-time Markov chains is given in \cite[Theorem 3.3.3]{JRN:97}. We restate the result here for the sake of completeness. Given a transition rate matrix $Q=[q_{a,b}]$ defined on a set of nodes $A$ and a subset $S\subset A$, the expected meeting times starting from a node $a\in A$ to the set $S$ denoted by $h_a^S$ is given by the solution to the system of equations
  \begin{equation} \label{eq:mfhtContinuous}
    \begin{cases}
      h_a^S =0 &\text{for}\enspace a\in S\\
      -\sum_{b\in A} q_{a,b} h_b^S = 1 &\text{for}\enspace a\notin S.
    \end{cases}
  \end{equation}
 The meeting times can be obtained as the solution to the system of equations above with transition rate matrix given by  $Q_\textnormal{p}\kron I_n + I_n \kron Q_\textnormal{e}$ and $S=\{(k,k)\,|\,k\in V\}$. Denoting $Q_\textnormal{p}\kron I_n + I_n \kron Q_\textnormal{e}$ and  $S=\{(k,k)\,|\,k\in V\}$ by $Q^\text{eff}$ and $S_\text{common}$, respectively, the system of equations in (\ref{eq:mfhtContinuous}) can be written as
  \begin{equation*}
    \begin{cases}
      m_{i,j} =0 &\text{for}\enspace (i,j)\in S_\text{common}\\
      -\sum_{k\in V}\sum_{h\in V}  Q^\text{eff}_{(i,j),(k,h)}m_{k,h} = 1 &\text{for}\enspace (i,j)\notin S_\text{common}.
    \end{cases}
  \end{equation*}
  These equations can be re-written in vector form as
  \begin{equation*}
    -EQ^\text{eff}\Vect{M} = E\vectorones[n^2],\qquad (E-I_{n^2})\Vect{M}= \vectorzeros[n^2].
  \end{equation*}
  Adding the above two equations we obtain equation~\refp{eq:CTMC-fmt}. If the matrix $E(I_{n^2}-Q^\text{eff})-I_{n^2}$ is invertible then we have a unique solution to the meeting times. We shall now show that the finiteness of meeting times as in $(i)$ is equivalent to the existence of walks to common nodes as mentioned in $(ii)$ and $(iii)$, which leads to invertibility of $E(I_{n^2}-Q^\text{eff})-I_{n^2}$ in $(iv)$.

   We start by proving that $(i) \implies (ii)$. If we assume that $(i) \centernot \implies (ii)$, then there exists a pair of nodes $i$ and $j$ such that the expected first meeting time is finite and there exists no walk to a common node in $V$. However if there exists no walk to a common node, then the agents never meet and the first meeting time is always infinite. Hence by contradiction $(i) \implies (ii)$.
  
  Now we shall prove that $(ii) \implies (iii)$. Since the matrix $Q_\textnormal{e}$ is added to every diagonal block of the joint transition rate matrix if there exists a walk in the transition rate matrix $Q_\textnormal{e}$ from $j$ to $k$ then there exists a walk in $Q_\textnormal{p}\kron I_n + I_n \kron Q_\textnormal{e}$ from $(i,j)$ to $(i,k)$ for every $i\in\until{n}$. Also note that the off diagonal block elements are of the form $q^{(p)}_{i',j'} I_n$. One can verify that because of this structure if there is a walk from $j \rightarrow k$ in $Q_\textnormal{p}$ then in $Q_\textnormal{p}\kron I_n + I_n \kron Q_\textnormal{e}$ there exists a walk from $(i,j)\rightarrow (k,j)$ for every $j \in \until{n}$. Hence if there is a walk $i \rightarrow k$ in $Q_\textnormal{p}$ and $j \rightarrow k$ in $Q_\textnormal{e}$ then there exists walks $(i,j)\rightarrow (i,k)\rightarrow (k,k)$, thus proving $(ii) \implies (iii)$. 

  Finally we shall prove $(iii)\implies(iv)$. First consider the modified transition rate matrix $E(Q_\textnormal{p} \kron I_n + I_n \kron Q_\textnormal{e})$. The matrix $E$ sets the rows corresponding to nodes of the form $(j,j)$ to $\vectorzeros[n^2]^\top$. The rank of a transition rate matrix is $n-d$ where $d$ is the number of sinks in the transition rate matrix~\cite{DMF-JAJ:75}. The matrix $E(Q_\textnormal{p} \kron I_n + I_n \kron Q_\textnormal{e})$ has at least $n$ sinks corresponding to the elements $(k,k)$ for every $j\in\until{n}$. If every node has a path to a node of the form $(k,k)$  as in $(iii)$, then there are only exactly $n$ sinks. This is becuase there are exactly $n$ nodes of the form $(k,k)$. Thus the rank of  $E(Q_\textnormal{p} \kron I_n + I_n \kron Q_\textnormal{e})$ is $n^2-n$ implying that this matrix has $n$ null eigenvectors. One can verify that the null eigenvectors (and basis vectors for the kernel) are given by $\vect{e}_1,\vect{e}_{n+1},\vect{e}_{2n+2},\dots, \vect{e}_{n^2}$. Let the other eigenvectors be $v_1,v_2,\ldots,v_{n^2-n}$.  Since the kernel of $E(Q_\textnormal{p} \kron I_n + I_n \kron Q_\textnormal{e})$ is spanned by $\vect{e}_1,\vect{e}_{n+1},\vect{e}_{2n+2},\dots, \vect{e}_{n^2}$, the eigenvectors of the same matrix can be uniquely constructed by ensuring they are orthogonal to the kernel, i.e. $v_p^\top \vect{e}_q=0$ for every $p\in \until{n^2-n}$ and $q\in \{1,n+1,\dots,n^2\}$. Let us denote the eigenvalues associated with these eigenvectors as $\lambda_1,\lambda_2,\ldots, \lambda_{n^2-n}$. Consider the matrix $E(Q_\textnormal{p} \kron I_n + I_n \kron Q_\textnormal{e})+ (I_{n^2}-E)$. We shall show that  $E(Q_\textnormal{p} \kron I_n + I_n \kron Q_\textnormal{e})+ (I_{n^2}-E)$ has the same eigenvectors as $E(Q_\textnormal{p} \kron I_n + I_n \kron Q_\textnormal{e})$. It is easy to see that now the eigenvectors $\vect{e}_1,\vect{e}_{n+1},\ldots,\vect{e}_{n^2}$ have eigenvalues $1$. One can verify $v_q$ is still an eigenvector but with eigenvalue $\lambda_q+1$. Note that $E(Q_\textnormal{p} \kron I_n + I_n \kron Q_\textnormal{e})$ is positive semi-definite from Gershgorin's disk theorem~\cite{CDM:01}. Since $E(Q_\textnormal{p} \kron I_n + I_n \kron Q_\textnormal{e})$ has all non-negative eigenvalues we are assured that $E(Q_\textnormal{p} \kron I_n + I_n \kron Q_\textnormal{e})+ (I_{n^2}-E)$ has all positive eigenvalues and is invertible. Thus if $(iii)$ holds $-(E(Q_\textnormal{p} \kron I_n + I_n \kron Q_\textnormal{e})+(I_{n^2}-E))$ has full rank and is invertible. Thus equation~\refp{eq:CTMC-fmt} gives the unique solution to the meeting times. Therefore $(iii)\implies (iv)$.

  Note that the existence of $\Vect{M}$ in $(iv)$ gives $(iv)\implies (i)$. Thus we have shown that $(i) \implies (ii) \implies (iii) \implies (iv) \implies (i)$. Hence the four conditions are equivalent.
\end{proof}
One can derive sets of pairs of transition rate matrices for which meeting times are guaranteed to be finite akin to the discrete-time case: $\mathcal{Q}_\text{all-overlap}$, $\mathcal{Q}_\text{SA-overlap}$ and $\mathcal{Q}_\text{one-ergodic}$. The sets are almost identical in description except for the fact that periodicity conditions are no longer necessary. A notion of mean meeting time is applicable to the set of transition rate matrices belonging to $\mathcal{Q}_\text{SA-overlap}$.
\subsection{The group meeting times of multiple continuous-time Markov chains} The setup for multiple pursuers and evaders following continuous-time Markov chains on a common graph is identical to the multiple pursuers and multiple evaders in the discrete time case. Consider pursuer transition rate matrices $\multPursuersCTMC$ and evader transition rate matrices $\multEvadersCTMC$.
\begin{theorem}[The group meeting time of multiple continuous-time Markov chains]\label{thm:finite_CTMC_multiple}
  Consider Markov chains with transition rate matrices $\multPursuersCTMC,$ $Q_\textnormal{e}^{(1)},$ $Q_\textnormal{e}^{(2)},$ $\dots, Q_\textnormal{e}^{(M)}$ defined on a digraph $\G$ with nodeset $V = \until{n}$.  The following statements are equivalent:
  \begin{enumerate}
  \item for every $i_1,i_2,\dots i_L,j_1,j_2,\dots ,j_M \in\until{n}$, the expected first meeting time $\fmt_{i_1i_2\dots i_L,j_1j_2\dots j_M}$ is finite,
  \item for every $i_1,i_2,\dots,i_L,j_1,j_2,\dots,j_M \in\until{n}$, there exists a node $k$ such that there exists a walk from one of the nodes $i_1,i_2,\dots,i_L$ to $k$ in one of the transition rate matrices $\multPursuersCTMC$ and a walk exists from one of the nodes $j_1,j_2,\dots,j_M$ to $k$ in one of the transition matrices in $Q_\textnormal{e}^{(1)},$ $Q_\textnormal{e}^{(2)},$ $\dots,$ $Q_\textnormal{e}^{(L)}$,
  \item for every $i_1,i_2,\dots,i_L,j_1,j_2,\dots,j_M \in\until{n}$, there exists a
    walk in the digraph associated with the transition rate matrix $\sum_{l=1}^L I_{n^{l-1}}\kron Q_\textnormal{p}^{(l)}\kron I_{n^{L+M-l}}$ $+$ $\sum_{m=1}^M I_{n^{L+m-1}}\kron Q_\textnormal{e}^{(m)} \kron I_{n^{M-m}}$ from a node $(i_1,i_2,\dots ,i_L, j_1,j_2,$ $\dots ,j_M)$ to a node of the form $(i^{'}_1,i^{'}_2,\dots,k,\dots,i^{'}_L,j^{'}_1,j^{'}_2,\dots,k,\dots ,$ $j^{'}_M)$, for some $k\in\until{n}$, and
  \item the matrix $E(I_{n^{L+M}}-Q)-I_{n^{L+M}}$ is invertible and the expected first meeting time is given by
    \begin{align}\label{eq:CTMC_multiple}
      \Vect{\FMT} = (E(I_{n^{L+M}}-Q)-I_{n^{L+M}})^{-1} \vectorones[n^{L+M}], 
    \end{align} 
    where $\FMT \in \mathbb{R}^{n^L\times n^M}$, $\sum_{l=1}^L I_{n^{l-1}}\kron Q_\textnormal{p}^{(l)}\kron I_{n^{L+M-l}}$ $+$ $\sum_{m=1}^M I_{n^{L+m-1}}\kron Q_\textnormal{e}^{(m)}$ $\kron I_{n^{M-m}}$ and $E$ is a binary diagonal matrix with entries $\vectorones[n^{L+M}]-\Vect{[\delta_{i_1i_2\dots i_L,j_1j_2\dots j_M}]}$.
  \end{enumerate}
\end{theorem}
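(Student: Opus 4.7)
The plan is to extend the proof of Theorem~\ref{thm:CTMC_single} to the $L+M$-agent setting by the same overall strategy: recognize the group meeting time as the hitting time, for the joint continuous-time Markov chain on the product space $\until{n}^{L+M}$, of the set of configurations in which some pursuer shares a node with some evader, and then invoke the hitting-time system~(\ref{eq:mfhtContinuous}) from Norris. The first step is to identify the generator of the joint chain. Applying the product rule to the derivative of $P_\textnormal{p}^{(1)}(t) \kron \cdots \kron P_\textnormal{p}^{(L)}(t) \kron P_\textnormal{e}^{(1)}(t) \kron \cdots \kron P_\textnormal{e}^{(M)}(t)$ and using Lemma~\ref{def:KronProps1} to collect terms, this derivative factors as the Kronecker product itself times
\[
Q = \sum_{l=1}^L I_{n^{l-1}} \kron Q_\textnormal{p}^{(l)} \kron I_{n^{L+M-l}} + \sum_{m=1}^M I_{n^{L+m-1}} \kron Q_\textnormal{e}^{(m)} \kron I_{n^{M-m}},
\]
so $Q$ is the joint transition rate matrix. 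The meeting set $S_\text{common} = \{(i_1,\dots,i_L,j_1,\dots,j_M) : \delta_{i_1\dots i_L,j_1\dots j_M}=1\}$ is exactly the collection of rows that $E$ zeros out, and vectorizing the hitting-time equations from~(\ref{eq:mfhtContinuous}) applied to $Q$ and $S_\text{common}$ yields a matrix equation of the form in statement~$(iv)$ whenever the coefficient matrix is invertible.

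With this set-up, I would prove the four statements equivalent by establishing the cycle $(i)\!\implies\!(ii)\!\implies\!(iii)\!\implies\!(iv)\!\implies\!(i)$. For $(i)\!\implies\!(ii)$ the contrapositive is direct: if no pursuer walk and no evader walk ever reach a common node, the meeting event is impossible and $\fmt = \infty$. For $(ii)\!\implies\!(iii)$, observe that each summand of $Q$ contains $Q_\textnormal{p}^{(l)}$ or $Q_\textnormal{e}^{(m)}$ in exactly one Kronecker factor with identities elsewhere; consequently a walk from $i_{l'}$ to $k$ in $Q_\textnormal{p}^{(l')}$ lifts to a walk in the digraph of $Q$ that changes only the $l'$-th pursuer coordinate while holding all others fixed, and similarly for evader walks. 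Concatenating these lifted walks carries $(i_1,\dots,i_L,j_1,\dots,j_M)$ to a joint state in which the $l'$-th pursuer entry and the $m'$-th evader entry are both equal to $k$, i.e., an element of $S_\text{common}$. Finally $(iv)\!\implies\!(i)$ follows immediately from the finite-valued closed-form expression.

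The step $(iii)\!\implies\!(iv)$ is the main obstacle and requires the linear-algebraic analysis of the single-chain proof to be redone on the much larger state space. My plan is to adapt the rank argument used in Theorem~\ref{thm:CTMC_single}: left multiplication by $E$ zeros the rows of $Q$ indexed by $S_\text{common}$, producing exactly $|S_\text{common}|$ sinks in the digraph of $EQ$. Under $(iii)$ every non-sink joint state has a walk to $S_\text{common}$, so by~\cite{DMF-JAJ:75} the rank of $EQ$ equals $n^{L+M} - |S_\text{common}|$ with kernel spanned precisely by the standard basis vectors indexed by $S_\text{common}$. Adding $I_{n^{L+M}} - E$ to $EQ$ replaces those zero eigenvalues by $1$ on the kernel subspace while shifting the remaining eigenvalues by $+1$; a Gershgorin argument applied to $EQ$, using that each $Q_\textnormal{p}^{(l)}$ and $Q_\textnormal{e}^{(m)}$ has zero row sums and hence so does $Q$, ensures the nonzero eigenvalues of $EQ$ are already bounded away from $-1$, so after the $+1$ shift none of them hits zero. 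Therefore $-(EQ + I_{n^{L+M}}-E) = E(I_{n^{L+M}}-Q) - I_{n^{L+M}}$ is invertible and~(\ref{eq:CTMC_multiple}) gives the unique solution.

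The hard part in this last step is the bookkeeping: $S_\text{common}$ is a geometrically complicated subset of $\until{n}^{L+M}$ rather than the diagonal of the single-chain case, so one must carefully verify that the standard basis vectors indexed by $S_\text{common}$ exhaust the kernel of $EQ$ (this is where $(iii)$ and the Foster--Jacquez rank formula are crucially combined), and that the Gershgorin spectral bound for $Q$ survives the fact that $Q$ is a sum of several embedded generators rather than a single Kronecker product. Once those verifications are in place the cycle closes and the theorem follows.
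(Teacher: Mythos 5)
Your proposal is correct and follows exactly the route the paper intends: the paper states Theorem~\ref{thm:finite_CTMC_multiple} without proof, remarking only that it ``utilizes the same technique as in the proof of Theorem~\ref{thm:CTMC_single}'' by building the joint transition rate matrix on the Kronecker digraph and computing the hitting time to the set of $(L+M)$-tuples in which a pursuer coordinate coincides with an evader coordinate, and your derivation of the generator $Q$ as the sum of embedded generators, the reduction to the hitting-time system~(\ref{eq:mfhtContinuous}) on $S_\text{common}$, and the sink-counting/rank/eigenvalue-shift argument for $(iii)\implies(iv)$ are precisely the natural elaboration of that technique. The only caveats you inherit are the same ones already present in the paper's single-chain argument (the semi-definiteness claim via Gershgorin and the eigenvalue-shift step), so your adaptation is as sound as the original.
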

We state this result without proof as it utilizes the same technique as in the proof of Theorem~\ref{thm:CTMC_single}. The proof of this result involves constructing the joint transition rate matrix of all agents on the Kronecker digraph and then computing the hitting time to the set of $(L+M)$ tuples of nodes such that one of the first $L$ entries is the same as one of the next $M$ entries. The complexity of computing meeting times using equation~\refp{eq:CTMC_multiple} for continuous-time Markov chains is $O(n^{3(L+M)})$ as it involves inversion of a matrix which has $n^{L+M}$ elements, which is identical to the discrete-time case.
% \clearpage
\section{Conclusions}\label{sec:conclusion}
We have studied the meeting time of multiple random walkers on a graph and have presented necessary and sufficient conditions for finiteness and novel closed-form expressions for the expected time to meeting between a single pursuer and a single evader, multiple pursuers and multiple evaders, and extended the treatment to continuous-time chains. We also provide sufficient conditions for certain pairs (or tuples) of Markov chains that satisfy conditions on their absorbing classes to have finite meeting times. Finally, we discuss connections to other metrics relevant to Markov chains such as the hitting time.

Several future directions of interest are left open by this work. Though we provide closed-form expressions here, the complexity involved in the calculation makes the computation expensive for large number of agents and on graphs with large number of nodes. It would be of practical interest to devise a formulation which has lower complexity. The literature on computationally efficient methods to calculate the SimRank of nodes on a graph might provide alternative formulations~\cite{CL-JH-GH-XJ-YS-YY-TW:10}. An extension of the work discussed here would be to consider walkers moving with travel times similar to the case of doubly weighted graphs described in~\cite{RP-PA-FB:14b}. Finally, it would be interesting to see if the notion of walks on Kronecker product graphs, which is extensively used in this paper, can be utilized to provide closed-form formulations of other quantities such as the expected time to capture of all evaders.
\section{Appendix}\label{sec:appendix}
% ---------------------------------------------------------------
% \begin{appendix}
For completeness we include the following lemmas which are necessary for the proof of Theorem~\ref{thm:necessary_sufficient_finite} and Theorem~\ref{thm:finite_multiple}.
\begin{lemma} [Convergence of substochastic matrices]  \label{lem:substochastic}
  Let $P \in \mathbb{R}^{n \times n}$ be a substochastic matrix with at least one row-sum $\sum_{j=1}^n P_{i,j} <1$. If for every node there exists a walk to a node with row-sum less than 1, then $P$ is convergent.
\end{lemma}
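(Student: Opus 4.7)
The plan is to show that some power $P^{k}$ has operator $\infty$-norm strictly less than one, since this immediately yields $\rho(P)<1$ and hence $P^k\to 0$. The key monotonicity observation is that substochasticity gives $P\vectorones[n]\leq\vectorones[n]$ componentwise, and multiplying both sides on the left by the (componentwise non-negative) matrix $P^{k}$ gives $P^{k+1}\vectorones[n]\leq P^{k}\vectorones[n]$. Thus the sequence of row-sum vectors $P^{k}\vectorones[n]$ is monotonically non-increasing componentwise, so once an entry drops below $1$ it stays below $1$ for all larger powers.

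First I would establish the following sub-claim: if there exists a walk of length $\ell\geq 1$ from node $i$ to a node $j$ whose row-sum satisfies $(P\vectorones[n])_{j}<1$, then $(P^{\ell+1}\vectorones[n])_{i}<1$. Expanding
\begin{equation*}
(P^{\ell+1}\vectorones[n])_{i} \;=\; \sum_{k=1}^{n} (P^{\ell})_{i,k}\,(P\vectorones[n])_{k},
\end{equation*}
one has $(P^{\ell})_{i,k}\geq 0$ for all $k$, $(P\vectorones[n])_{k}\leq 1$ for all $k$, $(P^{\ell})_{i,j}>0$ by the existence of the walk, and $(P\vectorones[n])_{j}<1$ by hypothesis. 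Hence the $j$-th term of the sum is strictly smaller than $(P^{\ell})_{i,j}\cdot 1$ while all other terms are bounded by $(P^{\ell})_{i,k}\cdot 1$, giving $(P^{\ell+1}\vectorones[n])_{i}<\sum_{k}(P^{\ell})_{i,k}\leq 1$.

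Using the sub-claim together with the hypothesis that every node has a walk to some node with row-sum strictly less than one, I would, for each $i\in\until{n}$, produce an integer $k_{i}$ with $(P^{k_{i}}\vectorones[n])_{i}<1$. By the monotonicity observation this strict inequality persists for all $k\geq k_{i}$. Setting $k^{\star}:=\max_{i}k_{i}$ yields $P^{k^{\star}}\vectorones[n]<\vectorones[n]$ componentwise, i.e.\ every row-sum of $P^{k^{\star}}$ is strictly less than one, so $\|P^{k^{\star}}\|_{\infty}<1$. Gelfand's formula then gives $\rho(P)\leq \|P^{k^{\star}}\|_{\infty}^{1/k^{\star}}<1$, and therefore $P^{k}\to 0$, which is the definition of convergence.

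The only delicate point is the sub-claim, since it is tempting to argue ``$(P\vectorones[n])_{j}<1$ implies $(P^{\ell+1}\vectorones[n])_{i}<1$'' without checking that the positive coefficient $(P^{\ell})_{i,j}$ actually propagates the strict inequality through the non-negative combination; making this explicit is the main (modest) obstacle. Everything else is a straightforward bookkeeping argument combining substochasticity with monotonicity of iterates applied to $\vectorones[n]$.
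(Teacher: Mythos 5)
Your argument is correct and complete: the componentwise monotonicity of $P^{k}\vectorones[n]$, the sub-claim propagating a deficient row-sum back along a walk via the strictly positive entry $(P^{\ell})_{i,j}$, and the resulting bound $\norm{P^{k^{\star}}}_{\infty}<1$ together give $\rho(P)<1$ and hence $P^{k}\to 0$. The paper states this lemma in its appendix without any proof, so there is nothing to compare against; your write-up supplies the missing standard argument, and the only point worth making explicit is that a node whose own row-sum is already less than one is covered by taking $k_{i}=1$ directly (the length-zero case of your sub-claim).
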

\begin{lemma} [Existence of walks on Kronecker products]  \label{lem:kron_walk}
  Let $P_1,P_2,\dots,P_N \in \mathbb{R}^{n \times n}$ be stochastic matrices. If there exists a walk from $i_1\rightarrow j_1$ in $P_1$, $i_2\rightarrow j_2$ in $P_2$, $\dots$, and $i_N\rightarrow j_N$ in $P_N$ of equal length, then there exists a walk from $(i_1,i_2,\dots,i_N)$ to $(j_1,j_2,\dots,j_N)$ in $P_1\kron P_2 \kron \dots \kron P_N$.
\end{lemma}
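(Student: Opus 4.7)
My plan is to reduce the claim to a single clean identity about powers of Kronecker products, namely
\[
(P_1 \kron P_2 \kron \cdots \kron P_N)^{\ell} = P_1^{\ell} \kron P_2^{\ell} \kron \cdots \kron P_N^{\ell}.
\]
This identity follows by induction on $\ell$ from the mixed-product property $(A\kron B)(C\kron D) = (AC)\kron(BD)$ stated in Lemma~\ref{def:KronProps1}(i), extended to $N$ factors by a second straightforward induction on $N$. I would dispense with these two inductions in a single short paragraph since they are entirely mechanical.

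Next, I would translate walks into positive entries of matrix powers: a walk of length $\ell$ from $i_k$ to $j_k$ in the digraph associated with $P_k$ is, by the standard path-counting interpretation, equivalent to $(P_k^{\ell})_{i_k,j_k} > 0$. The hypothesis that the walks have equal length $\ell$ therefore gives $(P_k^{\ell})_{i_k,j_k} > 0$ simultaneously for all $k \in \until{N}$.

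Combining these, the entry of $(P_1 \kron \cdots \kron P_N)^{\ell} = P_1^{\ell} \kron \cdots \kron P_N^{\ell}$ indexed by the pair of tuples $((i_1,\dots,i_N),(j_1,\dots,j_N))$ equals, by the definition of the Kronecker product, the product $\prod_{k=1}^{N}(P_k^{\ell})_{i_k,j_k}$, which is strictly positive. This positive entry is precisely the statement that a walk of length $\ell$ exists from $(i_1,\dots,i_N)$ to $(j_1,\dots,j_N)$ in the digraph associated with $P_1 \kron \cdots \kron P_N$, which is what we needed.

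There is no real obstacle here; the only subtlety is being careful that the index convention for the Kronecker product used in the paper matches the one used when identifying rows and columns with tuples $(i_1,\dots,i_N)$, so that the factorization of the entry lines up as stated. The ``equal length'' hypothesis is essential because without it we would only obtain walks in individual factor graphs that advance by different numbers of steps, which cannot be synchronized into a single walk on the product graph.
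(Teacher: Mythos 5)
Your proof is correct and complete: the identity $(P_1\kron\cdots\kron P_N)^{\ell}=P_1^{\ell}\kron\cdots\kron P_N^{\ell}$, obtained by iterating the mixed-product property of Lemma~\ref{def:KronProps1}(i), together with the observation that nonnegativity of the $P_k$ makes $(P_k^{\ell})_{i_k,j_k}>0$ equivalent to the existence of a length-$\ell$ walk, and the factorization of the corresponding entry of the Kronecker product as $\prod_{k=1}^{N}(P_k^{\ell})_{i_k,j_k}$, yields exactly the claim. The paper states Lemma~\ref{lem:kron_walk} without proof (it cites Harary and Trauth only for the two-factor version used in Theorem~\ref{thm:necessary_sufficient_finite}), and your argument is the standard one the authors evidently intend, so your write-up simply supplies the justification the paper omits.
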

\bibliographystyle{plain}
\bibliography{alias,Main,FB}

\begin{thebibliography}{10}

\bibitem{PA-FB:15e}
P.~Agharkar and F.~Bullo.
\newblock Quickest detection over robotic roadmaps.
\newblock {\em IEEE Transactions on Robotics}, 32(1):252--259, 2016.

\bibitem{DA-JAF:02}
D.~Aldous and J.~A. Fill.
\newblock {Reversible Markov Chains and Random Walks on Graphs}, 2002.
\newblock Unfinished monograph, recompiled 2014, available at
  \url{http://www.stat.berkeley.edu/~aldous/RWG/book.html}.

\bibitem{DJA:91}
D.~J. Aldous.
\newblock Meeting times for independent {M}arkov chains.
\newblock {\em Stochastic Processes and their Applications}, 38(2):185--193,
  1991.

\bibitem{SA-RF-RL-GO:08}
S.~Alpern, R.~Fokkink, R.~Lindelauf, and G.~Olsder.
\newblock The "princess and monster" game on an interval.
\newblock {\em SIAM Journal on Control and Optimization}, 47(3):1178--1190,
  2008.

\bibitem{LB-JL:11}
L.~Backstrom and J.~Leskovec.
\newblock Supervised random walks: predicting and recommending links in social
  networks.
\newblock In {\em ACM International Conference on Web Search and Data Mining},
  pages 635--644, New York, USA, 2011.

\bibitem{THB-JSK:02}
T.~H. Blackwell and J.~S. Kaufman.
\newblock Response time effectiveness: Comparison of response time and survival
  in an urban emergency medical services system.
\newblock {\em Academic Emergency Medicine}, 9(4):288--295, 2002.

\bibitem{AB-PG-GH-JK:09}
A.~Bonato, P.~Golovach, G.~Hahn, and J.~Kratochv{\'\i}l.
\newblock The capture time of a graph.
\newblock {\em Discrete Mathematics}, 309(18):5588--5595, 2009.

\bibitem{AZB-ARK:89}
A.~Z. Broder and A.~R. Karlin.
\newblock Bounds on the cover time.
\newblock {\em Journal of Theoretical Probability}, 2(1):101--120, 1989.

\bibitem{NHB-LH-JWG:99}
N.~H. Bshouty, L.~Higham, and J.~Warpechowska-Gruca.
\newblock Meeting times of random walks on graphs.
\newblock {\em Information Processing Letters}, 69(5):259--265, 1999.

\bibitem{PB-MC-AS:97}
P.~Burgisser, M.~Clausen, and A.~Shokrollahi.
\newblock {\em Algebraic Complexity Theory}.
\newblock Springer, 1997.

\bibitem{AC-RP-FB:14k-conference}
A.~Carron, R.~Patel, and F.~Bullo.
\newblock Hitting time for doubly-weighted graphs with application to robotic
  surveillance.
\newblock In {\em {E}uropean {C}ontrol {C}onference}, pages 661--665, Aalborg,
  Denmark, June 2016.

\bibitem{CC-RE-HO-TR:13}
C.~Cooper, R.~Elsasser, H.~Ono, and T.~Radzik.
\newblock Coalescing random walks and voting on connected graphs.
\newblock {\em SIAM Journal on Discrete Mathematics}, 27(4):1748--1758, 2013.

\bibitem{CC-AF-TR:09}
C.~Cooper, A.~Frieze, and T.~Radzik.
\newblock Multiple random walks in random regular graphs.
\newblock {\em SIAM Journal on Discrete Mathematics}, 23(4):1738--1761, 2009.

\bibitem{DC-PR-PW:93}
D.~Coppersmith, P.~Tetali, and P.~Winkler.
\newblock Collisions among random walks on a graph.
\newblock {\em SIAM Journal on Discrete Mathematics}, 6(3):363--374, 1993.

\bibitem{JTC:89}
J.~T. Cox.
\newblock Coalescing random walks and voter model consensus times on the torus
  in $\mathbb{Z}^d$.
\newblock {\em Annals of Probability}, 17(4):1333--1366, 1989.

\bibitem{PGD-JLS:84}
P.~G. Doyle and J.~L. Snell.
\newblock {\em Random Walks and Electric Networks}.
\newblock Mathematical Association of America, 1984.

\bibitem{YE-AS-GAK:08}
Y.~Elmaliach, A.~Shiloni, and G.~A. Kaminka.
\newblock A realistic model of frequency-based multi-robot polyline patrolling.
\newblock In {\em International Conference on Autonomous Agents}, pages 63--70,
  Estoril, Portugal, May 2008.

\bibitem{RE-TS:11}
R.~Els\"{a}sser and T.~Sauerwald.
\newblock Tight bounds for the cover time of multiple random walks.
\newblock {\em Theoretical Computer Science}, 412(24):2623--2641, 2011.

\bibitem{SRE-TB:16}
S.~R. Etesami and T.~Ba{\c{s}}ar.
\newblock Convergence time for unbiased quantized consensus over static and
  dynamic networks.
\newblock {\em IEEE Transactions on Automatic Control}, 61(2):443--455, 2016.

\bibitem{DMF-JAJ:75}
D.~M. Foster and J.~A. Jacquez.
\newblock Multiple zeros for eigenvalues and the multiplicity of traps of a
  linear compartmental system.
\newblock {\em Mathematical Biosciences}, 26(1):89--97, 1975.

\bibitem{JRG-CM-RS:92}
J.~R. Gilbert, C.~Moler, and R.~Schreiber.
\newblock Sparse matrices in {MATLAB}: {D}esign and implementation.
\newblock {\em SIAM Journal on Matrix Analysis and Applications},
  13(1):333--356, 1992.

\bibitem{CG-MM-AS:04}
C.~Gkantsidis, M.~Mihail, and A.~Saberi.
\newblock Random walks in peer-to-peer networks.
\newblock In {\em Annual Joint Conference of the IEEE Computer and
  Communications Societies}, pages 120--130, Hong Kong, China, March 2004.

\bibitem{JG-JB:05}
J.~Grace and J.~Baillieul.
\newblock Stochastic strategies for autonomous robotic surveillance.
\newblock In {\em {IEEE} Conf.\ on Decision and Control and European Control
  Conference}, pages 2200--2205, Seville, Spain, December 2005.

\bibitem{FH-CATJ:66}
F.~Harary and C.~A. {Trauth~Jr.}
\newblock Connectedness of products of two directed graphs.
\newblock {\em SIAM Journal on Applied Mathematics}, 14(2):250--254, 1966.

\bibitem{RAH-CRJ:94}
R.~A. Horn and C.~R. Johnson.
\newblock {\em Topics in Matrix Analysis}.
\newblock Cambridge University Press, 1994.

\bibitem{JJH:13}
J.~J. Hunter.
\newblock The role of {K}emeny's constant in properties of {M}arkov chains.
\newblock {\em Communications in Statistics-Theory and Methods},
  43(7):1309--1321, 2014.

\bibitem{AI-MJ:90}
A.~Israeli and M.~Jalfon.
\newblock Token management schemes and random walks yield self-stabilizing
  mutual exclusion.
\newblock In {\em ACM Symposium on Principles of Distributed Computing}, pages
  119--131, New York, USA, 1990.

\bibitem{GJ-JW:02}
G.~Jeh and J.~Widom.
\newblock {SimRank:} a measure of structural-context similarity.
\newblock In {\em ACM International Conference on Knowledge Discovery and Data
  Mining (SIGKDD)}, pages 538--543, New York, USA, 2002.

\bibitem{JGK-JLS:76}
J.~G. Kemeny and J.~L. Snell.
\newblock {\em Finite Markov Chains}.
\newblock Springer, 1976.

\bibitem{SK:10}
S.~Kirkland.
\newblock Fastest expected time to mixing for a {M}arkov chain on a directed
  graph.
\newblock {\em Linear Algebra and its Applications}, 433(11-12):1988--1996,
  2010.

\bibitem{ML-GL:02}
M.~Levene and G.~Loizou.
\newblock Kemeny's constant and the random surfer.
\newblock {\em American Mathematical Monthly}, 109(8):741--745, 2002.

\bibitem{CL-JH-GH-XJ-YS-YY-TW:10}
C.~Li, J.~Han, G.~He, X.~Jin, Y.~Sun, Y.~Yu, and T.~Wu.
\newblock Fast computation of {SimRank} for static and dynamic information
  networks.
\newblock In {\em International Conference on Extending Database Technology},
  pages 465--476, New York, USA, 2010.

\bibitem{LL:93}
L.~Lov{\'a}sz.
\newblock Random walks on graphs: {A} survey.
\newblock In T.~Sz\"onyi D.~Mikl\'os, V. T.~S\'os, editor, {\em {Combinatorics:
  Paul Erd\H{o}s is Eighty}}, volume~2, pages 353--398. J\'anos Bolyai
  Mathematical Society, 1993.

\bibitem{QL-PC-EC-KL-SS:02}
Q.~Lv, P.~Cao, E.~Cohen, K.~Li, and S.~Shenker.
\newblock Search and replication in unstructured peer-to-peer networks.
\newblock In {\em International Conference on Supercomputing}, pages 84--95,
  New York, USA, 2002.

\bibitem{CDM:01}
C.~D. Meyer.
\newblock {\em Matrix Analysis and Applied Linear Algebra}.
\newblock SIAM, 2001.

\bibitem{YN-HO-KS-MY:10}
Y.~Nonaka, H.~Ono, K.~Sadakane, and M.~Yamashita.
\newblock The hitting and cover times of {M}etropolis walks.
\newblock {\em Theoretical Computer Science}, 411(16):1889--1894, 2010.

\bibitem{JRN:97}
J.~R. Norris.
\newblock {\em Markov Chains}.
\newblock Cambridge University Press, 1997.

\bibitem{TO:15}
T.~Ohwa.
\newblock Exact computation for meeting times and infection times of random
  walks on graphs.
\newblock {\em Pacific Journal of Mathematics for Industry}, 7(1):1--9, 2015.

\bibitem{MP-CD-VD-AK-YW:03}
M.~Papageorgiou, C.~Diakaki, V.~Dinopoulou, A.~Kotsialos, and Y.~Wang.
\newblock Review of road traffic control strategies.
\newblock {\em Proceedings of the IEEE}, 91(12):2043--2067, 2003.

\bibitem{FP-AF-FB:09v}
F.~Pasqualetti, A.~Franchi, and F.~Bullo.
\newblock On cooperative patrolling: Optimal trajectories, complexity analysis
  and approximation algorithms.
\newblock {\em IEEE Transactions on Robotics}, 28(3):592--606, 2012.

\bibitem{RP-PA-FB:14b}
R.~Patel, P.~Agharkar, and F.~Bullo.
\newblock Robotic surveillance and {M}arkov chains with minimal weighted
  {K}emeny constant.
\newblock {\em IEEE Transactions on Automatic Control}, 60(12):3156--3167,
  2015.

\bibitem{RP-AC-FB:14k}
R.~Patel, A.~Carron, and F.~Bullo.
\newblock The hitting time of multiple random walks.
\newblock {\em SIAM Journal on Matrix Analysis and Applications},
  37(3):933--954, 2016.

\bibitem{MSS-DT-SB:14}
M.~S. Squillante, D.~Towsley, and S.~Barker.
\newblock Improving the scalability of search in networks through multiple
  random walks.
\newblock {\em ACM SIGMETRICS Performance Evaluation Review}, 42(2):73--75,
  2014.

\bibitem{KS-DMS-MWS:09}
K.~Srivastava, D.~M. Stipanovi\`c, and M.~W. Spong.
\newblock On a stochastic robotic surveillance problem.
\newblock In {\em {IEEE} Conf.\ on Decision and Control}, pages 8567--8574,
  Shanghai, China, December 2009.

\bibitem{SS-SM-FB:06f}
S.~Susca, S.~Mart{\'\i}nez, and F.~Bullo.
\newblock Monitoring environmental boundaries with a robotic sensor network.
\newblock {\em IEEE Transactions on Control Systems Technology},
  16(2):288--296, 2008.

\bibitem{PT:91}
P.~Tetali.
\newblock Random walks and the effective resistance of networks.
\newblock {\em Journal of Theoretical Probability}, 4(1):101--109, 1991.

\bibitem{PT-PW:91}
P.~Tetali and P.~Winkler.
\newblock On a random walk problem arising in self-stabilizing token
  management.
\newblock In {\em Symposium on Principles of Distributed Computing}, pages
  273--280, 1991.

\bibitem{RV-OS-HJK-DHS-SS:02}
R.~Vidal, O.~Shakernia, H.~J. Kim, D.~H. Shim, and S.~Sastry.
\newblock Probabilistic pursuit-evasion games: theory, implementation, and
  experimental evaluation.
\newblock {\em IEEE Transactions on Robotics and Automation}, 18(5):662--669,
  2002.

\bibitem{PMW:62}
P.~M. Weichsel.
\newblock The {K}ronecker product of graphs.
\newblock {\em Proceedings of the American Mathematical Society}, 13(1):47--52,
  1962.

\end{thebibliography}
\end{document}